\newtheorem{theorem}{Theorem}[section]
\newtheorem{lemma}[theorem]{Lemma}
\newtheorem{thm}[theorem]{Theorem}
\newtheorem{prop}[theorem]{Proposition}
\newtheorem{cor}[theorem]{Corollary}
\theoremstyle{definition}
\theoremstyle{remark}
\newtheorem{exmp}[theorem]{Example}
\numberwithin{equation}{section}
\def\mod{{\rm mod\;}}
\def\Mod{{\rm Mod}}
\def\Tor{{\rm Tor}}
\def\supp{\mbox{supp\,}}
\def\rk{\mbox{\rm\scriptsize rank}}
\def\FL{{\textsc{fl}}}
\def\sp{\hspace{1ex}}
\def\sp{\hspace{0.3cm}}
\begin{document}

\title[Orientations, lattice polytopes, and group arrangements]
{Orientations, lattice polytopes, and group arrangements II: Modular and
integral flow polynomials of graphs}

\author{Beifang Chen}
\address{Department of Mathematics,
Hong Kong University of Science and Technology,
Clear Water Bay, Kowloon, Hong Kong}
\curraddr{}

\email{mabfchen@ust.hk}
\thanks{Research of the first author is supported by RGC Competitive Earmarked Research Grants
600703, 600506, and 600608}

\author{Richard Stanley}
\address{Department of Mathematics,
Massachusetts Institute of Technology, Cambridge, MA 02139, USA}
\email{rstan@math.mit.edu}
\thanks{}

\subjclass{05A99, 05B35, 05C15, 06A07, 52B20, 52B40}
\date{28 October 2005}


\keywords{Subgroup arrangement, hyperplane arrangement, orientation,
totally cyclic orientation, directed Eulerian subgraph, Eulerian
equivalence relation, flow polytope, integer flow, modular flow,
flow polynomial, dual flow polynomial, characteristic polynomial,
Tutte polynomial, reciprocity law}

\begin{abstract}
We study modular and integral flow polynomials of graphs by means of
subgroup arrangements and lattice polytopes. We introduce an
Eulerian equivalence relation on orientations, flow arrangements,
and flow polytopes; and we apply the theory of Ehrhart polynomials
to obtain properties of modular and integral flow polynomials. The
emphasis is on the geometrical treatment through subgroup
arrangements and Ehrhart polynomials. Such viewpoint leads to a
reciprocity law for the modular flow polynomial, which gives rise to
an interpretation on the values of the modular flow polynomial at
negative integers, and answers a question by Beck and Zaslavsky.
\end{abstract}

\maketitle

\section{Introduction}

The flow polynomial $\varphi(G,t)$ of a graph $G$ was introduced by
Tutte \cite{Tutte2} as a conceptual dual to the chromatic (or
tension) polynomial of $G$. When $G$ is a planar graph,
$\varphi(G,t)$ is essentially the chromatic polynomial $\chi(G^*,t)$
of the dual graph $G^*$ in the sense that
$\chi(G^*,t)=t^{c(G)}\varphi(G,t)$, where $c(G)$ is the number of
connected components of $G$. The historic Four-Color Conjecture of
the time was made by Tutte into the Five-Flow Conjecture: any
bridgeless graph admits a nowhere-zero integer 5-flow. Both
conjectures are still open and stimulate studies on chromatic and
flow polynomials. In a seminal paper \cite{Rota1}, Rota introduced
characteristic polynomial for posets and observed that
$\varphi(G,t)$ is the characteristic polynomial of the circuit
lattice of $G$. Greene and Zaslavsky \cite{Greene-Zaslvsky1} made
Rota's observation transparent between the flow polynomial and the
characteristic polynomial by using hyperplane arrangements. As a
special case of Zaslavsky's formula \cite{Zaslavsky1}, the absolute
value $|\varphi(G,-1)|$ counts the number of totally cyclic
orientations of $G$, which is a dual analog of Stanley's result on
chromatic polynomials: $|\chi(G,-1)|$ counts the number of acyclic
orientations of $G$. However, Stanley's result \cite{Stanley1}
includes an interpretation of the values of $\chi(G,t)$ at negative
integers, known as the Reciprocity Law of chromatic polynomials.
More recently, Kochol \cite{Kochol1} showed that the number of
integer-valued $q$-flows is a polynomial function of $q$ and
introduced the integral flow polynomial $\varphi_{\mathbbm z}(G,t)$;
Beck and Zaslavsky \cite{Beck-Zaslavsky1} studied the modular and
integral flow polynomials for graphs and signed graphs, using
Ehrhart polynomials of lattice polytopes. The present paper, as a
continuation of \cite{OLS-I}, is to associate flow group
arrangements with graphs, and to obtain a clear picture of the
relation between the integral flow polynomial and the modular flow
polynomial. The byproduct of this association is a generalization of
the Reciprocity Law of chromatic and tension polynomials to modular
and integral flow polynomials, and the interpretation of the values
of the modular and integral flow polynomials at zero and negative
integers. The geometric method of our exposition may be modified to
obtain analogous results on Tutte polynomials.

Let $G=(V,E)$ be a finite graph with possible loops and multiple
edges. We write $V=V(G)$, $E=E(G)$. For each subset $X\subseteq E$,
denote by $\langle X\rangle$ the induced subgraph $(V,X)$. An {\em
orientation} on $G$ is a (multivalued) function $\varepsilon:
V\times E\rightarrow\{-1,0,1\}$ such that (i) $\varepsilon(v,e)$ has
the ordered double-value $\pm1$ or $\mp1$ if $e$ is a loop at a
vertex $v$ and has a single-value otherwise, (ii)
$\varepsilon(v,e)=0$ if $v$ is not an end-vertex of $e$, and (iii)
$\varepsilon(u,e)\varepsilon(v,e)=-1$ if $e$ has two distinct
end-vertices $u,v$. Pictorially, if $e$ is a non-loop edge with
distinct end-vertices $u,v$, then
$\varepsilon(u,e)=-\varepsilon(v,e)=1$ (or
$\varepsilon(v,e)=-\varepsilon(u,e)=1$), and it means that $e$ is
assigned an arrow from $u$ to $v$, which contributes exactly one
out-degree at $u$ and one in-degree at $v$. If $e$ is a loop at a
vertex $v$, then $\varepsilon(v,e)=\pm1$ or $\mp1$, and it means
that the loop $e$ is assigned an arrow, pointing away and to the
vertex $v$, which contributes exactly one out-degree and one
in-degree at $v$. We assume $-(\pm1)=\mp1$, $-(\mp1)=\pm1$. A graph
$G$ together with an orientation $\varepsilon$ is called a {\em
digraph}, denoted $(G,\varepsilon)$. A digraph is said to be {\em
directed Eulerian} if its in-degree equals its out-degree at every
vertex.

Let $(G,\varepsilon)$ be a digraph throughout the whole paper.
Associated with $(G,\varepsilon)$ is the {\em incidence matrix}
${\bm M}={\bm M}(G):=[{\bm m}_{v,e}]_{V\times E}$, where ${\bm
m}_{v,e}=0$ if the edge $e$ is a loop and ${\bm
m}_{v,e}=\varepsilon(v,e)$ if $e$ is not a loop. Let $A$ be an
abelian group. A {\em flow} of $(G,\varepsilon)$ with values in $A$,
or an {\em $A$-flow}, is a function $f:E\rightarrow A$, satisfying
the {\em Conservation Law:}
\begin{equation}\label{Balance-Equation}
\sum_{e\in E}{\bm m}_{v,e}(v,e)f(e)=0\sp \mbox{or}\sp \sum_{e\in
E}\varepsilon(v,e)f(e)=0,\sp v\in V,
\end{equation}
where $\varepsilon(v,e)$ is counted twice in the second sum as $-1$
and $1$ if $e$ is a loop at its unique end-vertex $v$. A flow $f$ is
said to be {\em nowhere-zero} if $f(e)\neq 0$ for all $e\in E$. We
denote by $F(G,\varepsilon;A)$ the abelian group of all $A$-flows of
$(G,\varepsilon)$. The {\em flow arrangement} of $(G,\varepsilon)$
with the abelian group $A$ is the group arrangement ${\mathcal
A}_\FL(G,\varepsilon;A)$ of $F(G,\varepsilon;A)$, consisting of the
subgroups
\begin{equation}\label{Edge-Graph-Arrangement}
F_e:=\{f\in F(G,\varepsilon;A)\:|\: f(e)=0\}, \sp e\in E.
\end{equation}
Coincidentally, we shall see that the characteristic polynomial
$\chi({\mathcal A}_\FL(G,\varepsilon;A),t)$ is equal to the {\em
(modular) flow polynomial} $\varphi(G,t)$, defined for $t=q$ as the
number of nowhere-zero flows of $(G,\varepsilon)$ with values in an
abelian group of order $q$. The polynomial $\varphi$ is independent
of the chosen orientation $\varepsilon$ and the abelian group
structure; see Rota~\cite{Rota1} and Tutte \cite{Tutte1}. For a
complete information about modular and integral flows, we refer to
the book of Zhang \cite{Zhang1}.

Recall that a {\em cut} of $G$ is a nonempty edge subset of the form
$[S,S^c]$, where $S\subseteq V$ is a nonempty proper subset,
$S^c:=V-S$ is the complement of $S$, and $[S,S^c]$ is the set of all
edges between the vertices of $S$ and the vertices of $S^c$. Let
$U=[S,S^c]$ be a cut. A {\em direction} of $U$ is an {\em
orientation} $\varepsilon_U$ on the induced subgraph $(V,U)$ such
that the arrows of the edges in $U$ are either all from $S$ to $S^c$
or all from $S^c$ to $S$; $U$ together with a direction
$\varepsilon_U$ is called a {\em directed cut}, denoted
$(U,\varepsilon_U)$. If $(U,\varepsilon)$ is a directed cut, we call
$(U,\varepsilon)$ a directed cut of both $(G,\varepsilon)$ and
$\varepsilon$, and say that the cut $U$ is {\em directed} in
$(G,\varepsilon)$. Let ${\mathcal O}(G)$ denote the set of all
orientations on $G$. We denote by ${\mathcal O}_\textsc{tc}(G)$ the
set of all orientations without directed cut (also known as {\em
totally cyclic orientations}, as they are the orientations in which
every edge belongs to a directed circuit).

Let $\varphi_{\mathbbm z}(G,q)$ denote the number of nowhere-zero
integer-valued flows $f$ of $(G,\varepsilon)$ such that $0<|f(e)|<q$
for all $e\in E$. As pointed out by Beck and Zaslavsky
\cite{Beck-Zaslavsky1}, the function $\varphi_{\mathbbm z}(G,q)$ was
never mentioned to be a polynomial until Kochol \cite{Kochol1}. If
$\varepsilon$ is totally cyclic, we introduce the counting functions
\begin{equation}
\varphi_{\varepsilon}(G,q):=\#\{f\in F(G,\varepsilon;{\Bbb Z})\:|\:
0< f(e)< q, e\in E\},
\end{equation}
\begin{equation}
\bar\varphi_{\varepsilon}(G,q):=\#\{f\in F(G,\varepsilon;{\Bbb
Z})\:|\: 0\leq f(e)\leq q, e\in E\},
\end{equation}
and the relatively open 0-1 polytope
\begin{equation}
\Delta^+_{\textsc{fl}}(G,\varepsilon):=\{f\in F(G,\varepsilon;{\Bbb
R})\:|\: 0<f(e)<1, e\in E\}.
\end{equation}
The closure $\bar\Delta^+_{\textsc{fl}}(G,\varepsilon)$ is a $0$-$1$
polytope (whose vertices are 0-1 vectors), and is the convex hull of
all $0$-$1$ flows of $(G,\varepsilon)$ in ${\Bbb R}^E$ (flows whose
values are either 0 or 1), called the {\em flow polytope} of
$(G,\varepsilon)$. We introduce the following counting function
\begin{align} \bar \varphi_{\mathbbm z}(G,q): &=
\#\{(\rho,f)\:|\:\rho\in {\mathcal O}_\textsc{tc}(G), f\in
F(G,\rho;{\Bbb Z}), 0\leq f(e)\leq q, e\in E\}.
\end{align}
We shall see that $\varphi_{\mathbbm z}(G,q)$,
$\varphi_{\varepsilon}(G,q)$ are polynomial functions of positive
integers $q$, and $\bar\varphi_{\mathbbm z}(G,q)$,
$\bar\varphi_{\varepsilon}(G,q)$ are polynomial functions of
nonnegative integers $q$, and that $\varphi_{\mathbbm z}(G,q)$ is
independent of the chosen orientation $\varepsilon$. The
corresponding polynomial $\varphi_{\mathbbm z}(G,t)$
($\bar\varphi_{\mathbbm z}(G,t)$) is called the  {\em (dual)
integral flow polynomial} of $G$, and $\varphi_{\varepsilon}(G,t)$
($\bar\varphi_{\varepsilon}(G,t)$) the {\em local (dual) flow
polynomial with respect to the orientation $\varepsilon$}. The names
and notations are so selected in order to easily recognize these
polynomials.

We first reproduce a result due to Kochol \cite{Kochol1} about
Equation (\ref{FOP}), and due to Beck and Zaslavsky
\cite{Beck-Zaslavsky1} about the combinatorial interpretation of the
values of $\varphi_{\mathbbm z}(G,t)$ at nonpositive integers.

\begin{thm}[Kochol \cite{Kochol1}, Beck and Zaslavsky \cite{Beck-Zaslavsky1}]\label{Integral-Flow-Thm}
Let $G=(V,E)$ be a finite bridgeless graph with possible loops and
multiple edges.
\begin{enumerate}
\item[(a)] If the orientation $\varepsilon$ is totally cyclic,
then $\Delta^+_\FL(G,\varepsilon)$ is a relatively open $0$-$1$
polytope in $\Bbb R^E$ of dimension $n(G)$;
$\varphi_{\varepsilon}(G,t)$ and $\bar{\varphi}_{\varepsilon}(G,t)$
are Ehrhart polynomials of $\Delta^+_\FL(G,\varepsilon)$ and
$\bar\Delta^+_\FL(G,\varepsilon)$ respectively, and satisfy the {\em
Reciprocity Law:}
\begin{equation}\label{FPRL}
\varphi_{\varepsilon}(G,-t)=(-1)^{n(G)}\bar{\varphi}_{\varepsilon}(G,t),
\end{equation}
where $n(G)=|E|-r(G)$ and $r(G)$ is the number of edges of a maximal
spanning forest of $G$. Moreover,
\[
\varphi_{\varepsilon}(G,0)=(-1)^{n(G)},\sp
\bar{\varphi}_{\varepsilon}(G,0)=1.
\]

\item[(b)] The integral flow polynomials $\varphi_{\mathbbm z}(G,t)$ and
$\bar{\varphi}_{\mathbbm z}(G,t)$ can be written as
\begin{eqnarray}
\varphi_{\mathbbm z}(G,t) &=& \sum_{\rho\in {\mathcal
O}_\textsc{tc}(G)}
\varphi_{\rho}(G,t),\label{FOP}\\
\bar{\varphi}_{\mathbbm z}(G,t) &=& \sum_{\rho\in {\mathcal
O}_\textsc{tc}(G)}\bar{\varphi}_{\rho}(G,t), \label{FCP}
\end{eqnarray}
and satisfy the {\em Reciprocity Law:}
\begin{equation}\label{phiRL}
\varphi_{\mathbbm z}(G,-t)=(-1)^{n(G)}\bar{\varphi}_{\mathbbm
z}(G,t).
\end{equation}
In particular, $|\varphi_{\mathbbm z}(G,0)|$ counts the number of
totally cyclic orientations on $G$.
\end{enumerate}
\end{thm}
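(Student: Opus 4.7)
For part (a), I would realize $\bar\Delta^+_\FL(G,\varepsilon)=[0,1]^E\cap F(G,\varepsilon;{\Bbb R})$, which is manifestly a rational polytope; its vertices, being the extreme points of an intersection of $[0,1]^E$ with a linear subspace, are $0$-$1$ vectors (namely the $0$-$1$ flows), so it is a $0$-$1$ polytope. The flow space $F(G,\varepsilon;{\Bbb R})=\ker{\bm M}$ has dimension $|E|-r(G)=n(G)$, so I only need to verify that the relatively open slice $\Delta^+_\FL(G,\varepsilon)$ is nonempty when $\varepsilon$ is totally cyclic. For each edge $e$ choose a directed circuit through $e$; its indicator is a $0$-$1$ flow of $(G,\varepsilon)$, and the sum of these indicators over $e\in E$ is a flow taking a value $\ge 1$ on every edge. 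Scaling by a suitably large integer $N$ produces a flow in $(0,1)^E\cap F(G,\varepsilon;{\Bbb R})$, so $\bar\Delta^+_\FL(G,\varepsilon)$ is full-dimensional in the flow space. Ehrhart's theorem then identifies $\bar\varphi_\varepsilon(G,q)$ and $\varphi_\varepsilon(G,q)$ as the Ehrhart polynomials of $\bar\Delta^+_\FL(G,\varepsilon)$ and its relative interior, and Ehrhart--Macdonald reciprocity delivers (\ref{FPRL}). The value $\bar\varphi_\varepsilon(G,0)=1$ comes from the unique lattice point $0$ in $0\cdot\bar\Delta^+$, after which $\varphi_\varepsilon(G,0)=(-1)^{n(G)}$ follows from (\ref{FPRL}).

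For part (b), I would exhibit a sign-reorientation bijection underlying (\ref{FOP}). Given a nowhere-zero integer flow $f$ of $(G,\varepsilon)$ with $0<|f(e)|<q$, let $\rho$ be the orientation that agrees with $\varepsilon$ on edges where $f(e)>0$ and reverses $\varepsilon$ on edges where $f(e)<0$; then $|f|$ is a strictly positive integer flow of $(G,\rho)$ satisfying $0<|f|(e)<q$, since reversing an edge and negating its value leaves the vertex conservation law intact. The orientation $\rho$ cannot admit a directed cut $[S,S^c]$: such a cut would force $\sum_{e\in[S,S^c]}|f|(e)=0$, impossible because every summand is positive. Hence $\rho\in{\mathcal O}_\textsc{tc}(G)$, and the assignment $f\mapsto(\rho,|f|)$ is clearly invertible, establishing (\ref{FOP}); the identity (\ref{FCP}) is simply the definition of $\bar\varphi_{\mathbbm z}$. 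Summing (\ref{FPRL}) over $\rho\in{\mathcal O}_\textsc{tc}(G)$ yields (\ref{phiRL}), and evaluating (\ref{FOP}) at $t=0$ together with $\varphi_\rho(G,0)=(-1)^{n(G)}$ gives $\varphi_{\mathbbm z}(G,0)=(-1)^{n(G)}|{\mathcal O}_\textsc{tc}(G)|$, so its absolute value counts totally cyclic orientations.

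The step I expect to require the most care is the dimension/non-emptiness claim in part (a): one has to confirm that total cyclicity of $\varepsilon$ is precisely the combinatorial condition which makes $\bar\Delta^+_\FL(G,\varepsilon)$ full-dimensional in $F(G,\varepsilon;{\Bbb R})$ and which legitimizes the application of Ehrhart--Macdonald reciprocity. The ingredient doing the work is the same cut/circuit duality that forbids directed cuts in part (b); once this is in hand, the remainder consists of routine Ehrhart-theoretic bookkeeping and the sign-reorientation correspondence.
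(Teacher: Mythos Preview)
Your approach is essentially the same as the paper's: Ehrhart--Macdonald reciprocity for part (a), and the sign-reorientation decomposition (the paper's involution $P_{\rho,\varepsilon}$ and Lemma~\ref{PRE}) for part (b). The bijection $f\mapsto(\rho,|f|)$ you describe is exactly the paper's disjoint decomposition $\Delta_\FL(G,\varepsilon)=\bigsqcup_{\rho}\Delta^\rho_\FL(G,\varepsilon)$ followed by $P_{\rho,\varepsilon}$.

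There is, however, a genuine gap in part (a). You assert that the vertices of $[0,1]^E\cap F(G,\varepsilon;{\Bbb R})$, ``being the extreme points of an intersection of $[0,1]^E$ with a linear subspace, are $0$-$1$ vectors.'' This is false for arbitrary linear subspaces: for instance, $\{2x_1=x_2\}\cap[0,1]^2$ has the vertex $(1/2,1)$. What makes it true here is that the incidence matrix ${\bm M}$ is totally unimodular, so any basic feasible solution of the system $\{{\bm M}x=0,\ 0\le x_e\le 1\}$ is integral. The paper establishes this explicitly in Lemma~\ref{0-1-Polytope}(b) by invoking total unimodularity; you need to do the same, or else cite the standard fact that the flow polytope of a digraph is integral. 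Without this, your Ehrhart argument does not get off the ground, since reciprocity in the form you use requires a \emph{lattice} polytope.

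A minor slip: to land in $(0,1)^E$ you want to \emph{divide} your strictly positive integer flow by a large $N$, not scale by $N$.
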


There are analogous results on the modular flow polynomial
$\varphi(G,t)$. To do this we need to introduce an equivalence
relation on the set ${\mathcal O}(G)$ of orientations on $G$. Two
orientations $\varepsilon_1,\varepsilon_2$ on $G$ are said to be
{\em Eulerian equivalent}, written $\varepsilon_1\sim\varepsilon_2$,
if the spanning subgraph induced by the edge subset $\{e\in
E\:|\:\varepsilon_1(v,e)\neq\varepsilon_2(v,e)\}$ is a directed
Eulerian subgraph with respect to the orientation either
$\varepsilon_1$ or $\varepsilon_2$. We shall see that $\sim$ is
indeed an equivalence relation on ${\mathcal O}(G)$. Moreover, if an
Eulerian equivalence class intersects ${\mathcal O}_\textsc{tc}(G)$,
the whole equivalence class is contained in ${\mathcal
O}_\textsc{tc}(G)$. So $\sim$ induces an equivalence relation on the
set ${\mathcal O}_\textsc{tc}(G)$ of totally cyclic orientations.
Let $[{\mathcal O}_\textsc{tc}(G)]$ denote a set of distinct
representatives, exact one representative from each equivalence
class of $\sim$ on ${\mathcal O}_\textsc{tc}(G)$. We introduce the
following counting function
\begin{align}
\bar{\varphi}(G,q): &= \#\{(\rho,f)\:|\: \rho\in [{\mathcal
O}_\textsc{tc}(G)],f\in F(G,\rho;{\Bbb Z}), 0\leq f(e)\leq q, e\in
E\}.
\end{align}
We next produce the following Theorem~\ref{Modular-Flow-Thm}, which
answers a question by Beck and Zaslavsky \cite{Beck-Zaslavsky1}
about the combinatorial interpretation of the values of the modular
flow polynomial $\varphi(G,t)$ at zero and negative integers.

\begin{thm}\label{Modular-Flow-Thm}
Let $G=(V,E)$ be a finite bridgeless graph with possible loops and
multiple edges. Then $\varphi(G,q)$ ($\bar{\varphi}(G,q))$ is a
polynomial function of degree $n(G)$ of positive (nonnegative)
integers $q$, and satisfy the {\em Reciprocity Law:}
\begin{equation}\label{Modular-Flow-Reciprocity-Law}
\varphi(G,-t)=(-1)^{n(G)}\bar{\varphi}(G,t).
\end{equation}
Moreover,
\begin{equation}\label{Modular-Flow-Sum}
\varphi(G,t)=\sum_{\rho\in [{\mathcal O}_\textsc{tc}(G)]}
\varphi_{\rho}(G,t),
\end{equation}
\begin{equation}\label{Closed-Modular-Flow-Sum}
\bar{\varphi}(G,t)=\sum_{\rho\in [{\mathcal O}_\textsc{tc}(G)]}
\bar{\varphi}_{\rho}(G,t).
\end{equation}
In particular, $|\varphi(G,-1)|$ counts the number of totally cyclic
orientations on $G$, and $|\varphi(G,0)|$ counts the number of
Eulerian equivalence classes of totally cyclic orientations.
\end{thm}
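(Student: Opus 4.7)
The plan is to deduce Theorem~\ref{Modular-Flow-Thm} from Theorem~\ref{Integral-Flow-Thm}(a) by first establishing the sum formulas~(\ref{Modular-Flow-Sum}) and~(\ref{Closed-Modular-Flow-Sum}), then inheriting polynomiality, degree, and reciprocity term by term. Before that, I would verify that $\sim$ is an equivalence relation on ${\mathcal O}(G)$ and restricts to ${\mathcal O}_\textsc{tc}(G)$. Reflexivity and symmetry are immediate, since the disagreement set is symmetric in its two arguments and reversing every arrow preserves the directed-Eulerian property. Transitivity reduces to the observation that the symmetric difference $D_{13}=D_{12}\triangle D_{23}$ of two directed Eulerian spanning subgraphs is again directed Eulerian: at each vertex the in-/out-degree contributions from $D_{12}$ and $D_{23}$ cancel pairwise on their overlap. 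Preservation of total cyclicity is then a short check: a directed cut of $\rho'\sim\rho$ would, after reversing arrows along the Eulerian symmetric difference $D$, produce a directed cut of $\rho$.

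The central step, which I expect to be the main obstacle, is the bijection behind~(\ref{Modular-Flow-Sum}). Fix any reference orientation $\varepsilon_0$. A pair $(\rho,f)$ with $\rho\in[{\mathcal O}_\textsc{tc}(G)]$ and $0<f(e)<q$ produces the integer flow $\tilde f(e):=\sigma(e)f(e)$ on $(G,\varepsilon_0)$, where $\sigma(e)=\pm1$ according as $\rho$ agrees or disagrees with $\varepsilon_0$; its reduction $\bar f\in F(G,\varepsilon_0;{\Bbb Z}/q{\Bbb Z})$ is nowhere zero since $0<|\tilde f(e)|<q$. Conversely, given nowhere-zero $\bar f$, choose residues $a_e\in\{1,\ldots,q-1\}$; then $\sum_e\varepsilon_0(v,e)a_e=qk_v$ for integers $k_v$ with $\sum_v k_v=0$. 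The real vector $a/q\in(0,1)^E$ lies in the transportation polytope
\[
P:=\bigl\{h\in[0,1]^E:\textstyle\sum_{e}\varepsilon_0(v,e)h(e)=k_v,\;v\in V\bigr\},
\]
so $P$ is nonempty; by total unimodularity of the incidence matrix of $G$, $P$ admits a $\{0,1\}$-vertex $h$, and $\tilde f(e):=a_e-qh(e)$ is an integer flow on $(G,\varepsilon_0)$ with $0<|\tilde f(e)|<q$. Its sign pattern recovers $(\rho,f)$, and $\rho$ is totally cyclic because $f$ is a nowhere-zero integer flow on $(G,\rho)$. For uniqueness modulo $\sim$: if $(\rho_1,f_1)$ and $(\rho_2,f_2)$ lift the same $\bar f$, then $g:=(\tilde f_1-\tilde f_2)/q$ lies in $\{-1,0,1\}^E$ (forced by $|\tilde f_i|<q$) and is an integer flow on $(G,\varepsilon_0)$ whose support is exactly the set on which $\rho_1,\rho_2$ disagree; orienting this support by $\rho_1$ (equivalently $\rho_2$ after arrow reversal) yields a directed Eulerian subgraph, so $\rho_1\sim\rho_2$. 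Hence exactly one representative $\rho\in[{\mathcal O}_\textsc{tc}(G)]$ maps to each $\bar f$, proving~(\ref{Modular-Flow-Sum}); the verbatim argument with nonstrict inequalities yields~(\ref{Closed-Modular-Flow-Sum}).

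From the two sum formulas, polynomiality of degree $n(G)$ and the reciprocity law~(\ref{Modular-Flow-Reciprocity-Law}) follow by summing the properties of $\varphi_\rho$ and $\bar\varphi_\rho$ from Theorem~\ref{Integral-Flow-Thm}(a) term by term. At $t=0$, each $\varphi_\rho(G,0)=(-1)^{n(G)}$ gives $|\varphi(G,0)|=|[{\mathcal O}_\textsc{tc}(G)]|$, the number of Eulerian equivalence classes of totally cyclic orientations. At $t=-1$, the reciprocity yields $|\varphi(G,-1)|=\bar\varphi(G,1)=\sum_{\rho\in[{\mathcal O}_\textsc{tc}(G)]}\bar\varphi_\rho(G,1)$, where $\bar\varphi_\rho(G,1)$ enumerates $\{0,1\}$-flows on $(G,\rho)$. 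Sending $(\rho,f)$ to the orientation obtained by reversing $\rho$ on $\{e:f(e)=1\}$ is a bijection from these pairs onto ${\mathcal O}_\textsc{tc}(G)$, because every totally cyclic orientation is Eulerian-equivalent to a unique $\rho\in[{\mathcal O}_\textsc{tc}(G)]$ and the indicator of their disagreement set is the required $\{0,1\}$-flow. Thus $|\varphi(G,-1)|=|{\mathcal O}_\textsc{tc}(G)|$, completing the proof.
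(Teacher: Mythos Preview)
Your approach coincides with the paper's: prove the sum formula~(\ref{Modular-Flow-Sum}) by lifting nowhere-zero $\mathbb{Z}/q\mathbb{Z}$-flows to integer $q$-flows, read the orientation off the sign pattern, show any two lifts give Eulerian-equivalent orientations, and then inherit polynomiality, degree, reciprocity, and the special values from Theorem~\ref{Integral-Flow-Thm}(a) term by term. The paper packages this through Lemmas~\ref{QRE}--\ref{Minverse} and a fiberwise lattice-point count; you phrase it as a direct bijection. The one genuine difference is how surjectivity of the lift (Tutte's Lemma~\ref{Surjective}) is obtained: the paper minimizes the defect $\eta(g,\rho)=\sum_v|\sum_e\rho(v,e)g(e)|$ over reorientations, whereas you observe that $a/q$ lies in a nonempty bounded polytope with totally unimodular constraint matrix and integral data, hence has a $\{0,1\}$-vertex $h$, giving the lift $a-qh$ in one stroke. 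That is a clean alternative to Tutte's argument.

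Two small corrections. First, (\ref{Closed-Modular-Flow-Sum}) is the \emph{definition} of $\bar\varphi(G,q)$ in this paper, so no argument is needed; and the ``verbatim argument with nonstrict inequalities'' would in fact fail, since when $f(e)\in\{0,q\}$ the sign of $\tilde f(e)$ no longer determines $\rho$ at $e$. Second, there is a small gap in your surjectivity: you produce a lift whose sign pattern is some totally cyclic $\rho'$, and you show that any two lifts have Eulerian-equivalent sign patterns, but you have not verified that the chosen \emph{representative} $\rho\in[\rho']$ itself admits a lift. The fix is immediate: the indicator of $E(\rho\neq\rho')$ is a $\{0,1\}$-flow on $(G,\rho')$, so its transfer to $(G,\varepsilon_0)$ equals $\operatorname{sign}(\tilde f')$ on that set, and $\tilde f:=\tilde f'-q\cdot\operatorname{sign}(\tilde f')\cdot 1_{E(\rho\neq\rho')}$ is a flow with sign pattern $\rho$ and the same reduction mod $q$. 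With that patch your proof is complete. Your derivation of $|\varphi(G,-1)|$ via $\bar\varphi(G,1)$ and the bijection of Proposition~\ref{EER} is correct; the paper instead quotes this from Theorem~\ref{MFP} (Zaslavsky's region count).
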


Equation (\ref{Modular-Flow-Sum}) is recently obtained by Kochol
\cite{Kochol1} with a formal proof in different form. The
combinatorial interpretation of $|\varphi(G,-1)|=T_G(0,2)$ is due to
Las Vergnas \cite{LasVergnas1}, see also Brylawski and Oxley
\cite{Brylawski-Oxley1}. At the moment of this revising, we noticed
a paper by Breuer and Sanyal \cite{Breuer-Sanyal1} on modular flow
reciprocity, which is quite different from our Reciprocity Law
(\ref{Modular-Flow-Reciprocity-Law}). The difference lies in that
the result of \cite{Breuer-Sanyal1} on $\varphi(G,-q)$ for a
positive integer $q$ involves the counting of flows modulo $q$, our
result only involves nonnegative integer flows bounded by $q$, and
that the bijection between the two counting sets is nontrivial; see
Section 5 for the detailed discussion. The proof of
Theorem~\ref{Modular-Flow-Thm} is rigorous and self-contained. The
following corollary is an immediate consequence of
Theorem~\ref{Modular-Flow-Thm}.

\begin{cor}
The value $T_G(0,1)$ of the Tutte polynomial $T_G(x,y)$ counts the
number of Eulerian equivalence classes of totally cyclic
orientations on $G$.
\end{cor}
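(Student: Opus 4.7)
The plan is to combine Theorem~\ref{Modular-Flow-Thm} with the well-known identity that expresses the modular flow polynomial as a specialization of the Tutte polynomial. Specifically, for a bridgeless graph $G$ one has
\[
\varphi(G,t)=(-1)^{n(G)}\,T_G(0,1-t),
\]
which is a standard fact going back to Tutte. Setting $t=0$ yields $\varphi(G,0)=(-1)^{n(G)}\,T_G(0,1)$, so in particular $|\varphi(G,0)|=T_G(0,1)$, since $T_G(0,1)$ is a nonnegative integer (it counts a combinatorial quantity, and our equality shows it equals the absolute value of an integer).

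The last clause of Theorem~\ref{Modular-Flow-Thm} states that $|\varphi(G,0)|$ is precisely the number of Eulerian equivalence classes of totally cyclic orientations on $G$. Chaining the two equalities gives the corollary immediately.

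There is essentially no obstacle here; the only ingredient external to the excerpt is the Tutte–flow identity $\varphi(G,t)=(-1)^{n(G)}T_G(0,1-t)$, which could alternatively be derived within the same framework from the deletion–contraction recursion for $\varphi(G,t)$ (both sides satisfy the same recursion and agree on the base cases of loops and bridges, noting that the bridgeless hypothesis ensures $\varphi(G,t)$ is nonzero and the recursion is well posed after the usual reduction). So the argument is one line of algebra feeding into Theorem~\ref{Modular-Flow-Thm}.
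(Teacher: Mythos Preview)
Your proof is correct and follows essentially the same route as the paper: invoke the standard identity $\varphi(G,t)=(-1)^{n(G)}T_G(0,1-t)$, evaluate at $t=0$, and combine with the last clause of Theorem~\ref{Modular-Flow-Thm}. The paper phrases the conclusion via $\bar\varphi(G,0)=T_G(0,1)$ (using $\bar\varphi(G,t)=T_G(0,t+1)$), but this is the same computation.
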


\section{Characteristic polynomials of group arrangements}

Let $\Omega$ be a finitely generated abelian group. By a {\em flat}
of $\Omega$ we mean a coset of a subgroup of $\Omega$. For a
subgroup $\Gamma\subseteq\Omega$, we denote by $\Tor(\Gamma)$ the
torsion subgroup of $\Gamma$ and write
$|\Gamma|:=|\Tor(\Gamma)|\,t^{\rk(\Gamma)}$. By a {\em subgroup
arrangement} (or just {\em arrangement}) of $\Omega$ we mean a
finite collection of flats of $\Omega$. Associated with a subgroup
arrangement $\mathcal A$ is the semilattice $\mathscr L(\mathcal
A)$, whose members are nonempty sets obtained from all possible
intersections of flats in $\mathcal A$. The {\em characteristic
polynomial} of $\mathcal A$ is defined as
\begin{equation}\label{Defn-Char}
\chi(\mathcal A,t) = \sum_{X\in\mathscr L(\mathcal A)}
\frac{|\Tor(\Omega)|}{|\Tor(\Omega/\langle
X\rangle)|}\,\mu(X,\Omega) \,t^{\rk\langle X\rangle},
\end{equation}
where $\mu$ is the M\"{o}bius function of the poset $\mathscr
L({\mathcal A})$, whose partial order is the set inclusion, $\langle
X\rangle:=\{x-y\;|\;x,y\in X\}$.

Let ${\mathscr B}(\Omega)$ be the Boolean algebra generated by
cosets of all subgroups of $\Omega$, i.e., every member of $\mathscr
B(\Omega)$ is obtained from cosets of subgroups of $\Omega$ by
taking unions, intersections, and complements finitely many times. A
{\em valuation} on $\Omega$ with values in an abelian group $A$ is a
map $\nu:\mathscr B(\Omega)\rightarrow A$ such that
\begin{gather}
\nu(\emptyset) =0, \nonumber\\
\nu(X\cup Y) =\nu(X)+\nu(Y)-\nu(X\cap Y) \nonumber
\end{gather}
for $X,Y\in{\mathscr B}(\Omega)$. A valuation $\nu$ is said to be
{\em translation invariant} if
\[
\nu(S+x)=\nu(S)
\]
for $S\in{\mathscr B}(\Omega)$ and any $x\in\Omega$; and $\nu$ is
said to satisfy {\em multiplicativity} if
\[
\nu(A+B)=\nu(A)\,\nu(B)
\]
for subgroups $A,B\subseteq\Omega$ such that $A+B$ is a direct sum
of $A$ and $B$, and the subgroup $A+B$ is a direct summand of
$\Omega$.

\begin{thm}[Chen \cite{OLS-I}]\label{Subgroup-Valuation}
For any finitely generated abelian group $\Omega$, there exists a
unique translation invariant valuation $\lambda:\mathscr
B(\Omega)\rightarrow\Bbb Q[t]$ such that the multiplicativity is
satisfied and
\[
\lambda(\Omega)=|\Tor(\Omega)|\,t^{\rk(\Omega)}=|\Omega|.
\]
In particular, $\lambda(\Gamma)=\frac{|\Omega|}{|\Omega/\Gamma|}$
for any subgroup $\Gamma\subseteq\Omega$, and for any subgroup
arrangement $\mathcal A$ of $\Omega$,
\[
\lambda\bigg(\Omega-\bigcup_{X\in\mathcal A}X\bigg) =\chi(\mathcal
A,t).
\]
\end{thm}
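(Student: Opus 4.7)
The plan is to establish existence and uniqueness separately, then derive the two explicit formulas.

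For uniqueness, I would use the structure theorem to write $\Omega \cong \Bbb Z^r \oplus T$ with $T = \Tor(\Omega)$. For an arbitrary subgroup $\Gamma \subseteq \Omega$, the saturation $\bar\Gamma$ (the preimage of $\Tor(\Omega/\Gamma)$ under the projection $\Omega \to \Omega/\Gamma$) satisfies $\Omega/\bar\Gamma$ torsion-free, so $\bar\Gamma$ is a direct summand of $\Omega$. Multiplicativity relates $\lambda(\bar\Gamma)$ to $\lambda$ on a complementary free summand, and a simultaneous analysis of all direct-summand decompositions, using multiplicativity across many bases together with the normalization $\lambda(\Omega) = |\Omega|$, forces $\lambda(\bar\Gamma) = |\Tor(\Omega)|\,t^{\rk\bar\Gamma}$. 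Partitioning $\bar\Gamma$ into its $[\bar\Gamma:\Gamma] = |\Tor(\Omega/\Gamma)|$ cosets of $\Gamma$ and invoking translation invariance then yields $\lambda(\Gamma) = \lambda(\bar\Gamma)/|\Tor(\Omega/\Gamma)| = |\Omega|/|\Omega/\Gamma|$, the asserted subgroup formula. Translation invariance extends this to all cosets, and since every element of $\mathscr B(\Omega)$ is a Boolean combination of cosets, inclusion-exclusion determines $\lambda$ uniquely on $\mathscr B(\Omega)$.

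For existence, I would define $\lambda$ on cosets by $\lambda(\Gamma + x) := |\Omega|/|\Omega/\Gamma|$ and extend to $\mathscr B(\Omega)$ via a Groemer-type extension. The family of cosets is closed under intersection (the intersection of two cosets is either empty or again a coset), and the proposed $\lambda$ is additive on the disjoint coset partitions that arise naturally, for example the partition of $\bar\Gamma$ into cosets of $\Gamma$. Groemer's extension theorem then produces a unique valuation on the Boolean algebra generated by cosets, namely $\mathscr B(\Omega)$. Translation invariance is inherited from the generating family, and multiplicativity $\lambda(A + B) = \lambda(A)\lambda(B)$ for $A + B$ a direct summand of $\Omega$ reduces, via the subgroup formula, to the formal identity $|A \oplus B \oplus C| = |A||B||C|$ for $\Omega = A \oplus B \oplus C$, using the symbols $|G| = |\Tor(G)|\,t^{\rk G}$.

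The identity $\lambda(\Omega - \bigcup_{X \in \mathcal A} X) = \chi(\mathcal A, t)$ then follows by inclusion-exclusion:
\begin{equation*}
\lambda\bigg(\Omega - \bigcup_{X \in \mathcal A} X\bigg) = \sum_{J \subseteq \mathcal A}(-1)^{|J|}\,\lambda\bigg(\bigcap_{X \in J}X\bigg),
\end{equation*}
where the empty intersection is taken to be $\Omega$. Intersections of flats are either empty or flats, so grouping the terms by the resulting flat $Y \in \mathscr L(\mathcal A)$ and applying M\"obius inversion on the poset $\mathscr L(\mathcal A)$ rewrites the sum as $\sum_{Y \in \mathscr L(\mathcal A)}\mu(Y,\Omega)\,\lambda(\langle Y\rangle)$, matching \eqref{Defn-Char} because $\lambda(\langle Y\rangle) = |\Tor(\Omega)|\,t^{\rk\langle Y\rangle}/|\Tor(\Omega/\langle Y\rangle)|$ by the subgroup formula.

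The main obstacle is the well-definedness of the extension of $\lambda$ from cosets to $\mathscr B(\Omega)$: the same element of $\mathscr B(\Omega)$ can be expressed as a Boolean combination of cosets in many different ways, and one must verify that inclusion-exclusion produces a consistent value independent of the representation. This is precisely the content of Groemer's extension theorem, whose hypotheses reduce to a concrete finite-additivity check on the intersection-closed family of cosets.
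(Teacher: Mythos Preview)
The paper does not give its own proof of this theorem: it is quoted from \cite{OLS-I}, and the only part argued here is the final identity $\lambda(\Omega-\bigcup\mathcal A)=\chi(\mathcal A,t)$, via the M\"obius-inversion computation \eqref{SF}--\eqref{Incl-Excl-Valuation}. Your derivation of that identity follows exactly the same route as the paper.

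Your outline for existence and uniqueness is the standard one and is in the spirit of the cited reference: define $\lambda$ on cosets by the asserted formula, invoke a Groemer-type extension to pass to $\mathscr B(\Omega)$, and for uniqueness reduce to the value on subgroups via translation invariance and the saturation trick. One point deserving more care is your ``simultaneous analysis of all direct-summand decompositions'': from multiplicativity and the normalization alone you get, e.g.\ in $\Omega=\Bbb Z^2$, only $\lambda(\Bbb Z e_1)\lambda(\Bbb Z e_2)=t^2$, and ruling out solutions like $\lambda(\Bbb Z e_i)=-t$ requires an additional ingredient (for instance, first pinning down $\lambda$ on finite subgroups via coset partitions and $\lambda(\{0\})=1$, then handling rank inductively). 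This is not a fatal gap, but the step is genuinely where the work in the uniqueness proof lies, and your sketch glosses over it.
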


The analogue of Theorem~\ref{Subgroup-Valuation} for vector spaces
was obtained by Ehrenborg and Readdy \cite{Ehrenborg-Readdy1}. Let
$V$ be a vector space over an infinite field. Let $\mathscr L(V)$ be
the lattice of all affine subspaces of $V$. We denote by $\mathscr
B(V)$ the Boolean algebra generated by $\mathscr L(V)$. A {\em
subspace arrangement} of $V$ is a finite collection $\mathcal A$ of
affine subspaces of $V$. The {\em torsion} of any subspace is just
the zero space. Then {\em characteristic polynomial} $\chi(\mathcal
A,t)$ of a subspace arrangement $\mathcal A$ can be defined by the
same formula \eqref{Defn-Char} for subgroup arrangement.

\begin{thm}[Ehrenborg and Readdy \cite{Ehrenborg-Readdy1}]\label{Subspace-Valuation}
For any finite-dimensional vector space $V$ over an infinite field
$\Bbb K$, there exists a unique translation invariant valuation
$\lambda:\mathscr B(V)\rightarrow\Bbb Z[t]$ such that
$\lambda(W)=t^{\dim W}$ for subspaces $W\subseteq V$. Moreover, for
a subspace arrangement $\mathcal A$ of $V$,
\[
\lambda\bigg(V-\bigcup_{X\in\mathcal A}X\bigg) =\chi(\mathcal A,t).
\]
\end{thm}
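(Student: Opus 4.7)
The plan is to mirror the proof of Theorem~\ref{Subgroup-Valuation}, specializing to the torsion-free case where $V$ is a vector space over an infinite field $\Bbb K$. Three simplifications should occur: the torsion factor in \eqref{Defn-Char} disappears, every subspace is a direct summand, and the multiplicativity axiom reduces to the automatic identity $t^{\dim(W_1\oplus W_2)}=t^{\dim W_1}\,t^{\dim W_2}$, so it need not be imposed as a separate condition.

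For uniqueness, I would observe that any translation invariant valuation on $\mathscr B(V)$ is determined by its values on a family of generators closed under finite intersection. Affine subspaces form such a family, and $\lambda(W+v)=\lambda(W)=t^{\dim W}$ is forced by translation invariance and the prescribed rule on subspaces. Repeated application of the inclusion--exclusion identity then propagates the definition uniquely to all of $\mathscr B(V)$.

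For existence, the plan is to construct $\lambda$ via an explicit stratification. For $S\in\mathscr B(V)$, I would pick a finite family $\mathcal A=\{W_1,\ldots,W_m\}$ of affine subspaces so that $S$ lies in the Boolean subalgebra they generate, and let $\mathscr L=\mathscr L(\mathcal A)\cup\{V\}$ be the intersection semilattice (ordered by inclusion). For $X\in\mathscr L$, let $F_X:=X\setminus\bigcup_{Y\subsetneq X,\,Y\in\mathscr L}Y$ be the relatively open face at $X$; then $V$ is the disjoint union $\bigsqcup_{X\in\mathscr L}F_X$, and any such $S$ is a finite disjoint union of these faces. I would define
\[
\lambda(F_X):=\sum_{Y\subseteq X,\,Y\in\mathscr L}\mu(Y,X)\,t^{\dim Y},
\]
extend additively to disjoint unions, and verify that the resulting $\lambda$ is translation invariant and a valuation. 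Taking $X=V$ recovers $\lambda(V-\bigcup W_i)=\chi(\mathcal A,t)$; taking $\mathcal A=\emptyset$ inside a subspace $W$ recovers $\lambda(W)=t^{\dim W}$.

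The main obstacle will be well-definedness: the same $S$ can arise from two different families $\mathcal A$ and $\mathcal A'$, and the two prescriptions must yield the same value. I would resolve this by passing to the common refinement $\mathcal A''=\mathcal A\cup\mathcal A'$ and invoking a Weisner-type identity for the M\"obius function to collapse the finer formula on $\mathscr L(\mathcal A'')$ to the coarser ones on $\mathscr L(\mathcal A)$ and $\mathscr L(\mathcal A')$. The hypothesis that $\Bbb K$ is infinite enters essentially here: it rules out the pathology that a proper affine subspace is covered by finitely many of its proper affine subspaces, so no dimensional collapse corrupts the rank data of the intersection semilattice under refinement. Once well-definedness is secured, the valuation identity and translation invariance follow at once from the additivity of the definition and the translation-equivariance of $\mathscr L$.
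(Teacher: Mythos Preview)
The paper does not prove Theorem~\ref{Subspace-Valuation}; it is stated as a cited result of Ehrenborg and Readdy \cite{Ehrenborg-Readdy1} and used as a black box (together with Theorem~\ref{Subgroup-Valuation}) in the remainder of Section~2. There is therefore no proof in the paper to compare your proposal against.

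On its own merits your sketch is a sound outline. Uniqueness via translation invariance plus inclusion--exclusion on the generating family of affine flats is the standard argument. For existence, your stratification-and-M\"obius construction works, and you correctly locate the one genuine obstacle (well-definedness under refinement) and the one place the infinite-field hypothesis is needed (no affine flat is a finite union of proper affine subflats). One comment: the well-definedness step can be streamlined by appealing to a Groemer-type extension criterion rather than a direct Weisner computation---a putative valuation on the intersection-closed family of affine flats extends to $\mathscr B(V)$ iff the alternating-sum identity holds whenever one flat is covered by finitely many others, and over an infinite field such a covering forces containment in a single flat, after which the identity is immediate. This is closer to how the result is established in \cite{Ehrenborg-Readdy1} and avoids tracking M\"obius functions across refinements.
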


One may combine Theorems~\ref{Subgroup-Valuation} and
\ref{Subspace-Valuation} by considering arrangements of affine
submodules. Let $M$ be a finitely generated left $R$-module over a
commutative ring $R$; we restrict $R$ to the cases of ${\Bbb R}$,
${\Bbb Z}$, and ${\Bbb Z}/q{\Bbb Z}$. By a {\em flat} of $M$ we mean
a subset of the form $a+N=\{a+x\:|\: x\in N\}$, where $N$ is a
submodule of $M$. Let $\mathscr L(M)$ be the lattice of all flats of
$M$, and $\mathscr B(M)$ the Boolean algebra generated by $\mathscr
L(M)$. For each subset $S\subseteq M$, we denote by $1_S$ the
characteristic function of $S$.

Let $\mathcal A$ be a finite collection of flats in $M$, called a
{\em submodule arrangement} of $M$. Let $\mathscr L({\mathcal A})$
be the poset whose members are nonempty sets obtained by taking all
possible intersections of flats in $\mathcal A$, and whose partial
order $\leq$ is the set inclusion. For each $X\in\mathscr
L({\mathcal A})$, we define
\[
X^\circ:=X-\bigcup_{Y\in\mathscr L({\mathcal A}),\, Y<X}Y.
\]
Clearly, $\{X^\circ\:|\: X\in\mathscr L({\mathcal A})\}$ is a family
of disjoint subsets of $M$. Then for each $X\in \mathscr L({\mathcal
A})$,
\[
1_X=\sum_{Y\in \mathscr L({\mathcal A}),\,Y\leq X} 1_{Y^\circ}.
\]
By the M\"{o}bius inversion, for each $X\in \mathscr L({\mathcal
A})$,
\[
1_{X^\circ}=\sum_{Y\in\mathscr L({\mathcal A}),\,Y\leq X}
\mu(Y,X)1_Y.
\]
In particular, $M^\circ=M-\bigcup{\mathcal
A}=M-\bigcup_{X\in\mathcal A}X$ and
\begin{equation}\label{SF}
1_{M-\bigcup{\mathcal A}}=\sum_{Y\in\mathscr L({\mathcal A})}
\mu(Y,M)1_Y.
\end{equation}
Thus for any valuation $\nu$ on $\mathcal{B}(M)$, we have the
Inclusion-Exclusion Formula:
\begin{equation}\label{Incl-Excl-Valuation}
\nu\big(M-\mbox{$\bigcup$}{\mathcal A}\big)=\sum_{X\in \mathscr
L({\mathcal A})}\mu(X,M)\nu(X).
\end{equation}
This is a prototype of many existing formulas when $\nu$ is taken to
be various valuations; see \cite{Char-poly, Ehrenborg-Readdy1,
Zaslavsky2}.

Let $M$ be the Euclidean $n$-space ${\Bbb R}^n$. One has half-spaces
$\{x\in{\Bbb R}^n\:|\: f(x)\leq c\}$ (with linear functionals
$f:{\Bbb R}^n\rightarrow{\Bbb R}$ and constant real numbers $c$),
convex polyhedra (intersections of half-spaces), and the Boolean
algebra $\mathscr B({\mathcal P}^n)$ (generated by half-spaces by
taking intersections, unions, and relatively complements finitely
many times). There are two valuations $\chi$ and $\bar\chi$ on
$\mathscr B({\mathcal P}^n)$, both are referred to the {\em Euler
characteristic} (see \cite{Euler-char,Schanuel1,Zaslavsky2}, for
example), such that for any relatively open convex polyhedron $P$,
\[
\chi(P)=(-1)^{\dim P}, \sp
\bar\chi(P)=\lim_{r\rightarrow\infty}\chi(P\cap[-r,r]^n).
\]
By Groemer's extension theorem \cite{Groemer1}, $\chi$ and
$\bar\chi$ can be extended to be linear functionals on the
functional space spanned by characteristic functions of convex
polyhedra. Now let $\mathcal A$ be a hyperplane arrangement of
${\Bbb R}^n$. Evaluating $\chi$ and $\bar\chi$ on both sides of
(\ref{Incl-Excl-Valuation}), one obtains Zaslavsky's first and
second counting formulas (see
\cite{Ehrenborg-Readdy1,Zaslavsky1,Zaslavsky2}):
\begin{equation}\label{Zaslavsky-First-Formula}
|\chi({\mathcal A},-1)|=\mbox{number of regions of}\;{\Bbb
R}^n-\mbox{$\bigcup$} \mathcal{A},
\end{equation}
\begin{equation}\label{Zaslavsky-Second-Formula}
|\chi({\mathcal A},1)| = \mbox{number of relatively bounded regions
of}\; {\Bbb R}^n-\mbox{$\bigcup$} \mathcal{A}.
\end{equation}

\section{Modular flow polynomials}

Let $(H_i,\varepsilon_i)$ be subdigraphs of the graph $G=(V,E)$,
$i=1,2$. The {\em coupling} of $\varepsilon_1$ and $\varepsilon_2$
is a function
$[\varepsilon_1,\varepsilon_2]:E\rightarrow\{-1,0,1\}$, defined for
each edge $e\in E$ (at its one end-vertex $v$) by
\begin{equation}
[\varepsilon_1,\varepsilon_2](e)=\left\{\begin{array}{rl} 1 &
\mbox{if $e\in E(H_1)\cap E(H_2)$, $\varepsilon_1(v,e)=\varepsilon_2(v,e)$,} \\
-1 & \mbox{if $e\in E(H_1)\cap E(H_2)$, $\varepsilon_1(v,e)\neq \varepsilon_2(v,e)$,}\\
0 & \mbox{otherwise}.
\end{array}\right.
\end{equation}
The following proposition is straightforward.

\begin{prop}[Berge \cite{Berge}]\label{Characterization-of-Eulerian}
\begin{enumerate}
\item[(a)] A function $f:E(G)\rightarrow A$ is a flow of $(G,\varepsilon)$ if
and only if for any directed cut $(U,\varepsilon_U)$,
\[
\sum_{e\in U}[\varepsilon,\varepsilon_U](e)f(e)=0.
\]
\item[(b)] In particular, the digraph $(G,\varepsilon)$ is directed Eulerian if
and only if for any directed cut $(U,\varepsilon_U)$,
\[
\sum_{e\in U}[\varepsilon,\varepsilon_U](e)=0.
\]

\item[(c)] The graph $G$ is Eulerian if and only if every cut $U$ contains even
number of edges.
\end{enumerate}
\end{prop}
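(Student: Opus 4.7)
The plan is to reduce all three parts to summation identities for the conservation law over vertex subsets. Part (a) is the substantive statement, and (b) and (c) then follow by specialization.

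For (a), the forward direction goes by fixing a directed cut $(U,\varepsilon_U)$ with $U=[S,S^c]$, directed (say) from $S$ to $S^c$, summing the conservation law $\sum_{e\in E}\varepsilon(v,e)f(e)=0$ over all $v\in S$, and interchanging the order of summation. An edge $e$ with two distinct end-vertices $u,v\in S$ contributes $\varepsilon(u,e)+\varepsilon(v,e)=0$ by property (iii) of an orientation; a loop at a vertex $v\in S$ contributes $0$ by the double-valued convention (its $-1$ and $+1$ values cancel); edges lying entirely in $\langle S^c\rangle$ never appear. What remains is a sum over cut edges $e\in U$: each such $e$ has a unique endpoint $v\in S$, and since $\varepsilon_U(v,e)=1$ there, the definition of the coupling yields $[\varepsilon,\varepsilon_U](e)=\varepsilon(v,e)$. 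Hence $\sum_{e\in U}[\varepsilon,\varepsilon_U](e)f(e)=0$. Conversely, applying the cut identity to the singleton cut $U=[\{v\},V\setminus\{v\}]$ reproduces the conservation law at $v$, since loops at $v$ lie outside $U$ but also contribute zero to the conservation law, so there is no mismatch.

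Part (b) is (a) applied to the constant function $f\equiv 1$, because ``directed Eulerian'' is literally the conservation law $\sum_{e}\varepsilon(v,e)=0$ at each vertex. Part (c) is a parity argument via the handshake identity $\sum_{v\in S}\deg(v)=2k_S+|[S,S^c]|$, where $k_S$ counts the edges with both endpoints in $S$ (loops counted once, so each contributes $2$ to the degree sum): evenness of every vertex degree forces $|U|$ even for every cut, and conversely the choice $S=\{v\}$ recovers $\deg(v)\equiv|[\{v\},V\setminus\{v\}]|\pmod{2}$, since loops at $v$ contribute $2$ to $\deg(v)$ and $0$ to the cut, preserving parity.

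The only real bookkeeping concern is the treatment of loops, which is handled uniformly by the double-valued $\pm 1$ convention at a loop together with the fact that loops never lie in any cut; this removes the need for separate case analysis anywhere in the proof, and I do not anticipate any more serious obstacle.
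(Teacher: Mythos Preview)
Your argument is correct and is precisely the standard one: summing the conservation law over $S$ for the forward direction of (a), specializing to singleton $S=\{v\}$ for the converse, taking $f\equiv 1$ for (b), and using the handshake parity identity for (c), with loops handled throughout by the $\pm1$ convention and the fact that loops never lie in a cut. The paper itself gives no proof of this proposition, labeling it ``straightforward'' and attributing it to Berge, so your write-up is exactly what is implicitly expected.
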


Let $F_{\rm nz}(G,\varepsilon;A)$ denote the set of all nowhere-zero
flows with values in $A$, i.e.,
\[
F_{\rm nz}(G,\varepsilon;A): =\{f\in F(G,\varepsilon;A)\:|\:
f(e)\neq 0,\:e\in E\}.
\]
If $|A|=q$ is finite, it is well-known (see \cite{Zhang1}) that the
counting function
\begin{equation}
\varphi(G,q):=|F_{\rm nz}(G,\varepsilon;A)|
\end{equation}
is a polynomial function of $q$, depending only on the order $|A|$,
but not on the chosen orientation $\varepsilon$ and the group
structure of $A$. The polynomial $\varphi(G,t)$ is called the {\em
modular flow polynomial} of $G$.

For two orientations $\rho,\sigma\in{\mathcal O}(G)$, there is an
involution $P_{\rho,\sigma}:A^E\rightarrow A^E$, defined by
\begin{equation}\label{TF}
\left(P_{\rho,\sigma} f\right)(e)=\left\{\begin{array}{rl}
f(e) & \mbox{\rm if $\rho(v,e)=\sigma(v,e)$,}\\
-f(e) & \mbox{\rm if $\rho(v,e)\neq\sigma(v,e)$.}
\end{array}\right.
\end{equation}
In fact, $P_{\rho,\varepsilon}f=[\rho,\varepsilon]f$. Obviously,
$P_{\rho,\rho}$ is the identity map,
$P_{\rho,\sigma}P_{\sigma,\varepsilon}=P_{\rho,\varepsilon}$.

\begin{lemma}The involution
$P_{\rho,\varepsilon}$ is a group isomorphism. Moreover,
\begin{eqnarray*}
P_{\rho,\varepsilon}F(G,\varepsilon;A) &=& F(G,\rho;A),\\
P_{\rho,\varepsilon}F_{\rm nz}(G,\varepsilon;A) &=& F_{\rm
nz}(G,\rho;A).
\end{eqnarray*}
\end{lemma}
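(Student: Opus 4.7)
My plan is to verify the three assertions in turn: that $P_{\rho,\varepsilon}$ is a group isomorphism of $A^E$, that it carries $F(G,\varepsilon;A)$ onto $F(G,\rho;A)$, and that it preserves the nowhere-zero property.

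I would first note that $P_{\rho,\varepsilon}$ acts on $A^E$ coordinatewise, sending each $f(e)$ to either $f(e)$ or $-f(e)$. Each of these is an automorphism of $A$, so $P_{\rho,\varepsilon}$ is a group homomorphism of $A^E$. For bijectivity I would use the relations already displayed in the excerpt: setting $\rho=\varepsilon$ in $P_{\rho,\sigma}P_{\sigma,\varepsilon}=P_{\rho,\varepsilon}$ gives $P_{\varepsilon,\sigma}P_{\sigma,\varepsilon}=\mathrm{id}$, so $P_{\rho,\varepsilon}^{-1}=P_{\varepsilon,\rho}$; the defining condition $\rho(v,e)=\varepsilon(v,e)$ in (\ref{TF}) is visibly symmetric in its two orientations, so $P_{\varepsilon,\rho}=P_{\rho,\varepsilon}$ and hence $P_{\rho,\varepsilon}^2=\mathrm{id}$, confirming that the involution is bijective.

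The heart of the proof is the pointwise identity
\[
\rho(v,e)\,(P_{\rho,\varepsilon}f)(e)=\varepsilon(v,e)\,f(e),\qquad v\in V,\ e\in E,
\]
which I would verify case by case. For a non-loop edge $e$ incident to $v$, the product $\rho(v,e)\varepsilon(v,e)\in\{-1,+1\}$ is independent of the chosen endpoint (flipping to the other endpoint flips both factors), and it equals the sign that $P_{\rho,\varepsilon}$ attaches to $e$ by definition; writing $(P_{\rho,\varepsilon}f)(e)=\rho(v,e)\varepsilon(v,e)\,f(e)$ and multiplying by $\rho(v,e)$ yields the identity via $\rho(v,e)^2=1$. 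If $v$ is not an endpoint of $e$, both sides are zero. For loops at $v$ the two $\pm 1$ occurrences that appear in the sum (\ref{Balance-Equation}) cancel on each side. Summing over $e\in E$ then gives $\sum_e\rho(v,e)(P_{\rho,\varepsilon}f)(e)=\sum_e\varepsilon(v,e)f(e)$ for every $v\in V$, so $f\in F(G,\varepsilon;A)$ forces $P_{\rho,\varepsilon}f\in F(G,\rho;A)$; the reverse inclusion follows by applying the same argument to $P_{\rho,\varepsilon}^{-1}=P_{\varepsilon,\rho}$.

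The nowhere-zero statement is then immediate, since $(P_{\rho,\varepsilon}f)(e)=\pm f(e)$ is nonzero iff $f(e)$ is, so the restriction of the isomorphism carries $F_{\mathrm{nz}}(G,\varepsilon;A)$ bijectively onto $F_{\mathrm{nz}}(G,\rho;A)$. The only real book-keeping concerns the double-valued convention for $\varepsilon(v,e)$ at loops, but since loops contribute zero to the conservation sum under any orientation this causes no difficulty; the genuine content of the lemma is the pointwise identity above, and once it is in hand the remaining assertions are formal.
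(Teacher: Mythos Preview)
Your proof is correct and follows essentially the same approach as the paper: both hinge on the pointwise identity $\rho(v,e)(P_{\rho,\varepsilon}f)(e)=\varepsilon(v,e)f(e)$, summed over $e$ to transfer the conservation law, together with the observation that $\pm f(e)\neq 0$ iff $f(e)\neq 0$. The paper's version is terser (writing the identity directly inside the sum as $\rho(v,e)\rho(v,e)\varepsilon(v,e)f(e)=\varepsilon(v,e)f(e)$), while you spell out the loop and non-incident cases more carefully, but the argument is the same.
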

\begin{proof}
It is clear that $P_{\rho,\varepsilon}$ is invertible and
$P^{-1}_{\rho,\varepsilon}=P_{\rho,\varepsilon}$. Let $f\in A^E$.
The group isomorphism follows from the fact that at each vertex $v$,
\[
\sum_{e\in E}\rho(v,e)(P_{\rho,\varepsilon}f)(e) =\sum_{e\in
E}\rho(v,e)\rho(v,e)\varepsilon(v,e)f(e) =\sum_{e\in
E}\varepsilon(v,e)f(e).
\]
Since $P_{\rho,\varepsilon}f(e)\neq 0$ is equivalent to $f(e)\neq
0$, it follows that $P_{\rho,\varepsilon}F_{\rm
nz}(G,\varepsilon;A)=F_{\rm nz}(G,\rho;A)$.
\end{proof}

Let $T$ be a {\em maximal forest} of $G$ in the sense that every
component of $T$ is a spanning tree of a component of $G$. For each
edge $e$ of the complement $T^c:=E-E(T)$, let $C_e$ denote the
unique circuit contained in $T\cup e$, and let $\rho_e$ be a
direction of $C_e$ (i.e. $(C_e,\rho_e)$ is directed Eulerian) such
that $\rho_e(e)=\varepsilon(e)$. It is easy to see that
$[\varepsilon,\rho_e]$ is a flow of $(G,\varepsilon)$.

\begin{lemma}[Berge \cite{Berge}]\label{Rank-Flow-Space}
Let $T$ be a maximal spanning forest of $G$. Then each flow $f$ of
the digraph $(G,\varepsilon)$ can be expressed as a unique linear
combination
\begin{equation}\label{circuit-eq}
f = \sum_{e\in T^c} f(e)[\varepsilon,\rho_e].
\end{equation}
\end{lemma}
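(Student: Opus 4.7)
The plan is to first verify that each summand $[\varepsilon,\rho_e]$ on the right-hand side is itself a flow of $(G,\varepsilon)$, then show that existence and uniqueness both reduce to two easy observations: the support of $[\varepsilon,\rho_e]$ meets $T^c$ only in the single edge $e$, and a flow supported on a forest must vanish.

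For the first point, fix $e\in T^c$. Since $(C_e,\rho_e)$ is a directed circuit, at every vertex $v$ one has $\sum_{e'\in C_e}\rho_e(v,e')=0$. For any $e'\in C_e$ incident to $v$, the coupling satisfies $[\varepsilon,\rho_e](e')=\varepsilon(v,e')\rho_e(v,e')$, so $\varepsilon(v,e')[\varepsilon,\rho_e](e')=\varepsilon(v,e')^2\rho_e(v,e')=\rho_e(v,e')$. Summing over $e'$ and using that $[\varepsilon,\rho_e]$ vanishes off $C_e$ gives $\sum_{e'\in E}\varepsilon(v,e')[\varepsilon,\rho_e](e')=0$, so $[\varepsilon,\rho_e]\in F(G,\varepsilon;A)$. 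Note also that $\rho_e(e)=\varepsilon(e)$ gives $[\varepsilon,\rho_e](e)=1$, and for any other $e''\in T^c$ with $e''\neq e$ we have $e''\notin T\cup e\supseteq C_e$, hence $[\varepsilon,\rho_e](e'')=0$.

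Now set $g:=\sum_{e\in T^c}f(e)[\varepsilon,\rho_e]$; it is a flow as a linear combination of flows. Evaluating at any $e^*\in T^c$, only the term indexed by $e^*$ survives by the preceding observation, giving $g(e^*)=f(e^*)$. Therefore $h:=f-g$ is a flow of $(G,\varepsilon)$ supported on $E(T)$. I claim $h\equiv 0$. Indeed, if the support $S:=\{e\in E(T):h(e)\neq 0\}$ were nonempty, then $(V,S)$ would be a nonempty forest, which possesses a vertex $v$ of degree exactly one in $S$; the conservation law at $v$ reduces to $\varepsilon(v,e_0)h(e_0)=0$ for the unique $e_0\in S$ incident to $v$, contradicting $h(e_0)\neq 0$. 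Hence $f=g$, establishing existence.

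Uniqueness is immediate from the same evaluation: if $\sum_{e\in T^c}a_e[\varepsilon,\rho_e]=0$, then evaluating at any $e^*\in T^c$ yields $a_{e^*}=0$. The only step that requires any care is the vanishing of a flow supported on a forest, and I would expect this leaf-pruning argument to be the one potential technicality; everything else is a direct computation from the definitions of the coupling and of $\rho_e$.
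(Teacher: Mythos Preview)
Your proof is correct. The paper does not supply its own proof of this lemma: it is stated with attribution to Berge and used as a black box (the paper only remarks beforehand that each $[\varepsilon,\rho_e]$ is a flow, without justification). Your argument is the standard one and fills in exactly what is needed: verifying that $[\varepsilon,\rho_e]$ is a flow via $\varepsilon(v,e')[\varepsilon,\rho_e](e')=\rho_e(v,e')$, observing that $[\varepsilon,\rho_e]|_{T^c}$ is the indicator of $e$, and then using the leaf-pruning argument to kill a flow supported on a forest. The leaf-pruning step is sound in any abelian group $A$, since $\varepsilon(v,e_0)\in\{\pm1\}$ forces $h(e_0)=0$ from $\varepsilon(v,e_0)h(e_0)=0$.
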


The system \eqref{Balance-Equation} for a flow $f$, whose equations
are indexed by vertices $v\in V$, is equivalent to the system
\eqref{circuit-eq}, which can be written as
\begin{equation}\label{circuit-eq-1}
f(x) = \sum_{e\in T^c} f(e)[\varepsilon,\rho_e](x), \sp x\in T,
\end{equation}
whose equations are indexed by edges $x\in T$. In other words, the
values $f(e)$ for $e\in T^c$ can be arbitrarily specified, and
$f(x)$ for $x\in T$ are determined by \eqref{circuit-eq-1}.

Let $n(G)$ denote the cycle rank of $G$. If $T$ is a maximal
spanning forest of $G$, then $n(G)$ is the number of edges of $T^c$.
Note that
\[
n(G)=|E|-|V|+c(G),
\]
where $c(G)$ is the number of connected components of $G$.
Lemma~\ref{Rank-Flow-Space} shows that the abelian group
$F(G,\varepsilon;A)$ is of rank $n(G)$. The {\em flow arrangement}
of the digraph $(G,\varepsilon)$ with the abelian group $A$ is the
subgroup arrangement ${\mathcal A}_{\FL}(G,\varepsilon;A)$ of
$F(G,\varepsilon;A)$, consisting of the subgroups
\begin{equation}
F_e\equiv F_e(G,\varepsilon;A):=\{f\in F(G,\varepsilon;A)\:|\:
f(e)=0\}, \sp e\in E.
\end{equation}
The semilattice $\mathscr{L}({\mathcal A}_{\FL}(G,\varepsilon;A))$
consists of the subgroups
\[
F_X\equiv F_X(G,\varepsilon;A) :=\{f\in F(G,\varepsilon;A)\:|\:
f(x)=0,\,x\in X\}, \sp X\subseteq E.
\]
Then $F\equiv F_\emptyset=F(G,\varepsilon;A)$. Using
\eqref{circuit-eq-1}, it is easy to see that the arrangement
${\mathcal A}_\FL(G,\varepsilon;A)$ is isomorphic to the arrangement
${\mathcal A}_\FL(G,T,\varepsilon;A)$ of $A^{E(T^c)}$, consisting of
the subgroups
\[
H_e:= \left\{\begin{array}{ll}
\{f\in A^{E(T^c)}\:|\: f(e)=0\} & \mbox{if $e\in T^c$,}\\
\{f\in A^{E(T^c)}\:|\: \sum_{x\in T^c}
[\varepsilon,\rho_x](e)f(x)=0\} & \mbox{if $e\in T$.}
\end{array}\right.
\]
The isomorphism is given by the restriction $f\mapsto f|_{E(T^c)}$,
sending $F_e$ to $H_e$.

\begin{lemma}\label{q^r}
The abelian group $F(G,\varepsilon;A)$ is isomorphic to the product
group $A^{n(G)}$. Moreover, for any subset $X\subseteq E$, the
subgroup $F_X(G,\varepsilon;A)$ is isomorphic to the product group
$A^{n\langle E-X\rangle}$. In particular, if $|A|=q$ is finite, then
\begin{equation}
|F(G,\varepsilon;A)|=q^{n(G)}, \sp |F_X(G,\varepsilon;A)|=q^{n\langle
E-X\rangle}.
\end{equation}
\end{lemma}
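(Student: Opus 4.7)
The plan is to derive both isomorphisms directly from Lemma~\ref{Rank-Flow-Space}, essentially by unwinding the decomposition \eqref{circuit-eq}.

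For the first assertion, fix a maximal spanning forest $T$ of $G$, so $|T^c|=n(G)$. Lemma~\ref{Rank-Flow-Space} states that every flow $f\in F(G,\varepsilon;A)$ has a \emph{unique} expression $f=\sum_{e\in T^c}f(e)[\varepsilon,\rho_e]$. I would read this as saying that the restriction map
\[
\Phi:F(G,\varepsilon;A)\longrightarrow A^{T^c},\sp \Phi(f)=(f(e))_{e\in T^c}
\]
is well-defined, injective (uniqueness), and surjective (since for any $(a_e)_{e\in T^c}\in A^{T^c}$ the element $\sum_{e\in T^c}a_e[\varepsilon,\rho_e]$ is a flow whose value at $e\in T^c$ is $a_e$). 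Linearity of the expansion in the coefficients $f(e)$ shows $\Phi$ is a group homomorphism, hence a group isomorphism $F(G,\varepsilon;A)\cong A^{n(G)}$.

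For the second assertion, I reduce the subgroup $F_X(G,\varepsilon;A)$ to a flow group of a smaller graph. Let $G_X:=\langle E-X\rangle=(V,E-X)$ with the restricted orientation $\varepsilon|_{E-X}$. A function $f\in A^E$ with $f(x)=0$ for all $x\in X$ satisfies the conservation law \eqref{Balance-Equation} at every vertex of $G$ if and only if its restriction $f|_{E-X}$ satisfies the conservation law at every vertex of $G_X$, because the terms corresponding to $x\in X$ vanish identically. Thus the restriction and zero-extension maps set up a group isomorphism
\[
F_X(G,\varepsilon;A)\;\cong\;F(G_X,\varepsilon|_{E-X};A).
\]
Applying the first part to $G_X$ yields $F_X(G,\varepsilon;A)\cong A^{n(G_X)}=A^{n\langle E-X\rangle}$.

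The cardinality statements when $|A|=q$ is finite follow immediately. I expect no real obstacle here: the only thing to check carefully is that the passage from $G$ to $G_X$ preserves the conservation law vertex-by-vertex, which is immediate since removing edges assigned value $0$ does not change any of the relevant sums.
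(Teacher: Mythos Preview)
Your argument is correct and is exactly what the paper has in mind: the paper's own proof is simply the word ``Trivial,'' relying on Lemma~\ref{Rank-Flow-Space} (and the discussion around \eqref{circuit-eq-1}) in precisely the way you unpack it. The reduction $F_X(G,\varepsilon;A)\cong F(\langle E-X\rangle,\varepsilon;A)$ followed by an application of the first part is the intended route.
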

\begin{proof}
Trivial
\end{proof}

\begin{thm}\label{MFP}
Let $A$ be an abelian group such that either $|A|=q$ is finite, or $A=\Bbb Z$,
or $A$ is an infinite field. Then
\begin{equation}\label{Modular-Flow-Characteristic-Formula}
\varphi(G,q) =\lambda\bigg(F(G,\varepsilon;A)-\bigcup_{e\in
E}F_e\bigg)\bigg|_{t=q} =\chi({\mathcal A}_\FL(G,\varepsilon;A),q).
\end{equation}
Moreover, $|\varphi(G,-1)|$ counts the number of totally cyclic
orientations on $G$.
\end{thm}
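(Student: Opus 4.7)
The first equality $\varphi(G,q)=\lambda(F(G,\varepsilon;A)-\bigcup_{e\in E}F_e)\big|_{t=q}$ is almost immediate once one recognizes that $F(G,\varepsilon;A)-\bigcup_e F_e=F_{\rm nz}(G,\varepsilon;A)$ by the definition of $F_e$. For $A$ finite of order $q$, the valuation $\lambda$ reduces to cardinality (all ranks are zero), so the right-hand side already equals $|F_{\rm nz}(G,\varepsilon;A)|=\varphi(G,q)$. For $A=\mathbb{Z}$ or an infinite field I would instead compute both sides via the Inclusion--Exclusion formula \eqref{Incl-Excl-Valuation}, using Lemma \ref{q^r} to evaluate $\lambda(F_X)=t^{n\langle E-X\rangle}$ and $|F_X|=q^{n\langle E-X\rangle}$, and then observe that the intersection semilattice $\mathscr{L}(\mathcal{A}_\FL)$ and its M\"obius function depend only on the cycle matroid of $G$ and not on $A$; the two sums then agree under $t\mapsto q$. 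The second equality $\lambda(F-\bigcup_e F_e)=\chi(\mathcal{A}_\FL(G,\varepsilon;A),t)$ is Theorem \ref{Subgroup-Valuation} applied directly to $\Omega=F(G,\varepsilon;A)$, and together these deliver both equalities in \eqref{Modular-Flow-Characteristic-Formula}.

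For the orientation count, I would specialize to $A=\mathbb{R}$, so that $\mathcal{A}_\FL(G,\varepsilon;\mathbb{R})$ becomes a central real hyperplane arrangement in $F(G,\varepsilon;\mathbb{R})\cong\mathbb{R}^{n(G)}$, whose characteristic polynomial still equals $\varphi(G,t)$ by the matroid invariance above. Zaslavsky's first formula \eqref{Zaslavsky-First-Formula} then equates $|\varphi(G,-1)|$ with the number of open regions of $F(G,\varepsilon;\mathbb{R})-\bigcup_e F_e$. Each region is determined by a sign vector $\sigma\in\{\pm 1\}^E$, and $\sigma$ in turn determines an orientation $\rho_\sigma$ by reversing $\varepsilon$ on edges where $\sigma(e)=-1$; the involution $P_{\rho_\sigma,\varepsilon}$ from \eqref{TF} identifies the region bijectively with the strictly positive cone $\{g\in F(G,\rho_\sigma;\mathbb{R}):g(e)>0 \text{ for all } e\in E\}$.

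The principal obstacle is then to show that this positive cone is nonempty exactly when $\rho_\sigma\in\mathcal{O}_\textsc{tc}(G)$. The forward direction follows from Proposition \ref{Characterization-of-Eulerian}(a): if $(U,\rho_\sigma|_U)$ were a directed cut of $\rho_\sigma$, the conservation law would force $\sum_{e\in U}g(e)=0$ for any flow $g$ of $(G,\rho_\sigma)$, incompatible with $g>0$, so $\rho_\sigma$ can have no directed cut. For the converse, a totally cyclic orientation decomposes its edges into directed circuits (every edge lies on one), and the sum of the unit circulations along these circuits produces a strictly positive real flow of $(G,\rho_\sigma)$. Combining this bijection with the region count yields $|\varphi(G,-1)|=|\mathcal{O}_\textsc{tc}(G)|$, completing the plan.
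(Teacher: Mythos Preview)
Your proposal is correct and follows essentially the same route as the paper: Inclusion--Exclusion over the intersection semilattice $\mathscr{L}(\mathcal{A}_\FL)$, the observation that this semilattice and its M\"obius function are independent of $A$ (the paper phrases this as an explicit poset isomorphism to the case $A=\mathbb{R}$), and then Zaslavsky's region-counting formula together with the bijection between regions and totally cyclic orientations, which the paper defers to the forward Lemmas~\ref{PRE} and \ref{0-1-Polytope}(a) while you argue it directly. One small caveat: when $A$ is an infinite field, $F(G,\varepsilon;A)$ is not a finitely generated abelian group, so for the identity $\lambda(F-\bigcup_e F_e)=\chi(\mathcal{A}_\FL,t)$ in that case you should invoke Theorem~\ref{Subspace-Valuation} rather than Theorem~\ref{Subgroup-Valuation}.
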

\begin{proof}
Let $X,Y\subseteq E$. If $|A|\geq 2$ (including $|A|=\infty$), then
$F_X(G,\varepsilon;A)\subseteq F_Y(G,\varepsilon;A)$ is equivalent
to that circuits of $\langle E-X\rangle$ are contained in $\langle
E-Y\rangle$. Thus the map $F_X(G,\varepsilon;A)\mapsto
F_X(G,\varepsilon;\Bbb R)$ is an isomorphism from $\mathscr
L(\mathcal A_\FL(G,\varepsilon;A))$ to $\mathscr L(\mathcal
A_\FL(G,\varepsilon;\Bbb R))$. Consequently, we have the same
M\"{o}bius function
\[
\mu(F_X(G,\varepsilon;A),F_Y(G,\varepsilon;A)) =
\mu(F_X(G,\varepsilon;\Bbb R),F_Y(G,\varepsilon;\Bbb R)).
\]
If $A$ is infinite, applying the valuation $\lambda$ to both sides
of \eqref{SF},  we have
\[
\lambda\bigg(F-\bigcup_{e\in E}F_e\bigg) =\sum_{F_X\in\mathscr
L(\mathcal A_\FL(G,\varepsilon;A))} \mu(F_X,F)\,t^{n\langle
E-X\rangle}.
\]
If $A$ is finite and $|A|=q$, applying the counting measure $\#$ to
both sides of \eqref{SF}, we have
\[
\varphi(G,q) =\sum_{F_X\in\mathscr L(\mathcal
A_\FL(G,\varepsilon;A))} \mu(F_X,F)\,q^{n\langle E-X\rangle}.
\]
The identity \eqref{Modular-Flow-Characteristic-Formula} follows
immediately for positive integers $q$.

For $q=1$, we have $A=\{0\}$ and $F_{\rm
nz}(G,\varepsilon;A)=\emptyset$. Hence $\varphi(G,1)=0$. Since the
hyperplane arrangement $\mathcal A_\FL(G,\varepsilon;\Bbb R)$ is
central (all hyperplanes pass through the origin), there is no
relatively bounded region. Zaslavsky's second counting formula
\eqref{Zaslavsky-Second-Formula} confirms that $\chi(\mathcal
A_\FL(G,\varepsilon;\Bbb R),1)=0$.

Finally, by Zaslavsky's counting formula
\eqref{Zaslavsky-First-Formula}, $(-1)^{n(G)}\varphi(G,-1)$ counts
the number of regions of the complement $F-\bigcup_{e\in E} F_e$. By
Lemmas~\ref{PRE} and \ref{0-1-Polytope}, the regions of the
complement $F(G,\varepsilon;{\Bbb R})-\bigcup{\mathcal
A}_\FL(G,\varepsilon;{\Bbb R})$ correspond bijectively to the
totally cyclic orientations on $G$.
\end{proof}

\section{Integral flow polynomials}

In this section we apply the Ehrhart polynomial theory to study
integral flow polynomials. Let us recall some well-known facts about
lattice polytopes and Ehrhart polynomials. Let $P$ be a relatively
open lattice polytope of ${\Bbb R}^n$, i.e., $P$ is open in the flat
that $P$ spans, and the vertices of $P$ are lattice points of ${\Bbb
Z}^n$. The closure of $P$ is denoted by $\bar P$. A bounded {\em
lattice polyhedron} is a disjoint union of finitely many relatively
open lattice polytopes. Let $X$ be a bounded lattice polyhedron and
$q$ a positive integer. The {\em dilatation} of $X$ by $q$ is the
polyhedron $qX:=\{qx\:|\:x\in X\}$. Let
\[
L(X,q):= \#(qX\cap {\Bbb Z}^n).
\]
It is known that $L(X,q)$ is a polynomial function of degree $\dim
X$ in the positive integer variable $q$, called the {\em Ehrhart
polynomial} of $X$. Moreover, the leading coefficient of $L(X,t)$ is
the volume of $X$; the constant term $L(X,0)$ is the Euler
characteristic $\chi(X)$. In particular, if $X=P$ is a relatively
open lattice polytope, then $L(P,q)$ and $L(\bar{P},q)$ satisfy the
{\em Reciprocity Law:}
\[
L(P,-t)=(-1)^{\dim P}L\bigl(\bar{P},t\bigr);
\]
the constant term of $L(\bar P,t)$ is $1$, and the constant term of $L(P,t)$ is
$(-1)^{\dim P}$. All these and other related properties about Ehrhart
polynomials can be found in \cite{Lattice-points,Ehrhart-poly,Stanley2}.

Flows with values in ${\Bbb R}$ are called {\em real flows}; and
flows with values in ${\Bbb Z}$ are called {\em integer flows}. A
flow $f\in F(G,\varepsilon;{\Bbb R})$ is called a {\em $q$-flow} if
$|f(e)|<q$ for all $e\in E$. We define the set of all real $q$-flows
of $(G,\varepsilon)$ as
\[
F(G,\varepsilon;q): =\{f\in F(G,\varepsilon;\Bbb R): |f(e)|<q, e\in
E\}.
\]
We denote by $F_{\mathbbm z}(G,\varepsilon;q)$ the set of all
integer $q$-flows of $(G,\varepsilon)$, and by $F_{\rm nz\mathbbm
z}(G,\varepsilon;q)$ the set of all nowhere-zero integer $q$-flows,
i.e.,
\[
F_{\rm nz\mathbbm z}(G,\varepsilon;q):=\{f\in F_{\mathbbm
z}(G,\varepsilon;q)\:|\: f(e)\neq 0, e\in E\}.
\]
Clearly, $F_{\rm nz\mathbbm z}(G,\varepsilon;q)$ is the set of
lattice points of the dilatation $q\Delta_\FL(G,\varepsilon)$
(dilated by $q$) of the nonconvex polyhedron
\[
\Delta_\FL(G,\varepsilon): =\{f\in F(G,\varepsilon;{\Bbb R}):
0<|f(e)|<1,e\in E\}.
\]
It follows that the counting function
\begin{equation}
\varphi_{\mathbbm z}(G,q):=|F_{\rm nz\mathbbm
z}(G,\varepsilon;q)|=L(\Delta_\FL(G,\varepsilon),q)
\end{equation}
is an Ehrhart polynomial of degree $\dim\Delta_\FL(G,\varepsilon)$
in the positive integer variable $q$. In fact, we shall see that
$|F_{\rm nz\mathbbm z}(G,\varepsilon;q)|$ is independent of the
chosen orientation $\varepsilon$. We call $\varphi_{\mathbbm
z}(G,t)$ the {\em integral flow polynomial} of $G$.

\begin{lemma}\label{Involution-EP}
The involution $P_{\rho,\varepsilon}$ is a group isomorphism from
${\Bbb R}^E$ to itself. Moreover,
\begin{eqnarray*}
P_{\rho,\varepsilon} \Delta_{\FL}(G,\varepsilon) &=&
\Delta_{\FL}(G,\rho),\\
P_{\rho,\varepsilon} F_{\rm nz\mathbbm z}(G,\varepsilon;q) &=&
F_{\rm nz\mathbbm z}(G,\rho;q).
\end{eqnarray*}
\end{lemma}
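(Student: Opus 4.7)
The plan is to piggyback on the earlier lemma (for general abelian groups $A$) and then upgrade the conclusion to the polytope and to the bounded integer-flow sets. The key point is that $P_{\rho,\varepsilon}$ acts on each coordinate by $\pm 1$, so it preserves absolute values, integrality, and nonzeroness; the only real content is that it sends flows for one orientation to flows for the other.

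First I would verify that $P_{\rho,\varepsilon}\colon \mathbb{R}^E \to \mathbb{R}^E$ is a group isomorphism. This is immediate from the definition \eqref{TF}: the map is coordinatewise multiplication by $\pm 1$, hence $\mathbb{R}$-linear, and it is its own inverse since $P_{\rho,\varepsilon} P_{\rho,\varepsilon} = P_{\rho,\rho} = \id$. Next I would establish the flow-preservation identity
\[
\sum_{e\in E}\rho(v,e)(P_{\rho,\varepsilon}f)(e) = \sum_{e\in E}\varepsilon(v,e)f(e), \qquad v\in V,
\]
by the same computation used in the earlier lemma, exploiting $\rho(v,e)^2=1$ when $\rho(v,e)\neq 0$ (and that $\rho(v,e)=0$ exactly when $\varepsilon(v,e)=0$, namely when $v$ is not an end-vertex of $e$). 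This shows $P_{\rho,\varepsilon}F(G,\varepsilon;\mathbb{R}) = F(G,\rho;\mathbb{R})$ directly, or one simply quotes the previous lemma with $A=\mathbb{R}$.

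With flow-preservation in hand, both equalities reduce to verifying that the defining inequalities of the respective sets are $P_{\rho,\varepsilon}$-invariant. For the polytope: $(P_{\rho,\varepsilon}f)(e) = \pm f(e)$, so $|(P_{\rho,\varepsilon}f)(e)| = |f(e)|$, and hence $0<|f(e)|<1$ for all $e$ if and only if $0<|(P_{\rho,\varepsilon}f)(e)|<1$ for all $e$. Combined with flow-preservation this gives $P_{\rho,\varepsilon}\Delta_\FL(G,\varepsilon)=\Delta_\FL(G,\rho)$ (after noting that $P_{\rho,\varepsilon}$ is an involution, so the inclusion in one direction implies equality). For the integer $q$-flow sets, the same observation shows that integrality, the bound $|f(e)|<q$, and nonvanishing $f(e)\neq 0$ are all preserved, yielding the second equality.

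There is no genuine obstacle here: the statement is a direct transcription of the earlier lemma with $A=\mathbb{R}$, together with the observation that the extra constraints cutting out $\Delta_\FL$ and $F_{\rm nz\mathbbm{z}}(G,\varepsilon;q)$ are all functions of $|f(e)|$, which $P_{\rho,\varepsilon}$ leaves unchanged. The only conceptual point worth flagging is that the equalities (as opposed to mere inclusions) follow because $P_{\rho,\varepsilon}$ is an involution, so symmetry in $\rho$ and $\varepsilon$ is automatic.
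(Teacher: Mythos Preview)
Your proposal is correct and follows essentially the same approach as the paper: quote the earlier lemma with $A=\mathbb{R}$ to get flow-preservation and the group isomorphism, then observe that the defining constraints of $\Delta_\FL$ and $F_{\rm nz\mathbbm z}$ depend only on $|f(e)|$ (and integrality), which coordinatewise multiplication by $\pm 1$ preserves. The paper's proof is a terser version of exactly this argument.
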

\begin{proof}
Let $A={\Bbb R}$, $f\in{\Bbb R}^E$, $e\in E$. Note that
$0<|P_{\rho,\varepsilon}f(e)|<1$ is equivalent to $0<|f(e)|<1$.
Hence $P_{\rho,\varepsilon} \Delta_{\FL}(G,\varepsilon) =
\Delta_{\FL}(G,\rho)$. Similarly, $P_{\rho,\varepsilon}f(e)\in{\Bbb
Z}$ is equivalent to $f(e)\in{\Bbb Z}$; and
$0<|P_{\rho,\varepsilon}f(e)|<q$ is equivalent to $0<|f(e)|<q$. Thus
$P_{\rho,\varepsilon} F_{\rm nz\mathbbm z}(G,\varepsilon;q) = F_{\rm
nz\mathbbm z}(G,\rho;q)$.
\end{proof}

Let $A=\Bbb R$. The subgroup arrangement ${\mathcal
A}_\FL(G,\varepsilon;{\Bbb R})$ is a hyperplane arrangement of
$F(G,\varepsilon;{\Bbb R})$. The complement of ${\mathcal
A}_\FL(G,\varepsilon;{\Bbb R})$ is the set
\[
F(G,\varepsilon;{\Bbb R}) - \mbox{$\bigcup$}{\mathcal
A}_\FL(G,\varepsilon;{\Bbb R}) =\{f\in F(G,\varepsilon;{\Bbb
R})\:|\: f(e)\neq 0, e\in E\}.
\]
For each edge $e\in E$ with end-vertices $u,v$, the nonzero
condition $f(e)\neq 0$ can be split into two inequalities:
\[
f(e)>0 \sp\mbox{and} \sp f(e)<0;
\]
the former can be interpreted as an orientation of $e$ agreeing with
$\varepsilon(u,e)$, and the latter is interpreted as an orientation
of $e$ opposite to $\varepsilon(u,e)$.

For each orientation $\rho\in\mathcal O(G)$, we introduce the open
convex cone
\[
C^\rho(G,\varepsilon):=\{f\in F(G,\varepsilon;\Bbb R)
:[\rho,\varepsilon](e)f(e)>0, e\in E\}.
\]
The complement $F(G,\varepsilon;{\Bbb R})-\bigcup {\mathcal
A}_{\FL}(G,\varepsilon;{\Bbb R})$ is a disjoint union of these open
convex cones, some of them may be empty. By Lemma~\ref{PRE} below,
the cone $C^\rho(G,\varepsilon)$ is isomorphic to the open convex
cone
\[
C^+(G,\rho):=\{f\in F(G,\rho;{\Bbb R})\:|\: f(e)>0, e\in E\}.
\]
We introduce the relatively open polytopes
\[
\Delta^+_{\FL}(G,\rho):=\{f\in F(G,\rho;{\Bbb R})\:|\: 0<f(e)<1,
e\in E\},
\]
\[
\Delta^\rho_{\FL}(G,\varepsilon): =\{f\in F(G,\varepsilon;{\Bbb
R})\:|\: 0<[\rho,\varepsilon](e)f(e)<1, e\in E\}.
\]
If  $\Delta^+_\FL(G,\rho)\neq\emptyset$ (equivalent to that $\rho$
is totally cyclic), then the closure of $\Delta^+_\FL(G,\rho)$ is
the polytope
\begin{equation}
\bar\Delta^+_\FL(G,\rho):=\{f\in F(G,\rho;{\Bbb R})\:|\: 0\leq
f(e)\leq 1, e\in E\}.
\end{equation}
Whether the orientation $\rho$ is totally cyclic or not, the set
$\bar\Delta^+_\FL(G,\rho)$ is always a polytope, and is called a
{\em flow polytope of $G$ with respect to $\rho$}.

\begin{lemma}\label{PRE}
$P_{\rho,\varepsilon} \Delta^\rho_\FL(G,\varepsilon) =
\Delta^+_\FL(G,\rho)$; and disjoint decomposition
\begin{eqnarray}
\Delta_\FL(G,\varepsilon) &=& \bigsqcup_{\rho\in{\mathcal O}(G)}
\Delta^\rho_\FL(G,\varepsilon). \label{Flow-Decomposition}
\end{eqnarray}
\end{lemma}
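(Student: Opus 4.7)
The plan is to prove both identities by tracking signs through the involution $P_{\rho,\varepsilon}$. Recall from the paragraph defining $P_{\rho,\varepsilon}$ that $P_{\rho,\varepsilon}f=[\rho,\varepsilon]f$, i.e.\ $(P_{\rho,\varepsilon}f)(e)=[\rho,\varepsilon](e)\,f(e)$, and that the previous lemma already establishes $P_{\rho,\varepsilon}F(G,\varepsilon;\mathbb{R})=F(G,\rho;\mathbb{R})$. So only the defining inequality conditions need to be checked to establish the two statements.

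For the first identity $P_{\rho,\varepsilon}\Delta^\rho_\FL(G,\varepsilon)=\Delta^+_\FL(G,\rho)$, I would argue inclusion in both directions. If $f\in\Delta^\rho_\FL(G,\varepsilon)$, then by definition $[\rho,\varepsilon](e)f(e)\in(0,1)$, hence $(P_{\rho,\varepsilon}f)(e)=[\rho,\varepsilon](e)f(e)\in(0,1)$ for all $e$, so $P_{\rho,\varepsilon}f\in\Delta^+_\FL(G,\rho)$. Conversely, since $P_{\rho,\varepsilon}$ is its own inverse, for any $g\in\Delta^+_\FL(G,\rho)$ we have $[\rho,\varepsilon](e)(P_{\rho,\varepsilon}g)(e)=[\rho,\varepsilon](e)^2\,g(e)=g(e)\in(0,1)$, placing $P_{\rho,\varepsilon}g$ in $\Delta^\rho_\FL(G,\varepsilon)$.

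For the disjoint decomposition, the key idea is that a real flow $f\in\Delta_\FL(G,\varepsilon)$ with $0<|f(e)|<1$ canonically determines an orientation $\rho_f$ by flipping the arrow of $\varepsilon$ precisely on those edges where $f(e)<0$. Concretely, I would set $\rho_f(v,e)=\mathrm{sign}(f(e))\,\varepsilon(v,e)$ on each non-loop edge at a vertex $v$, with the analogous double-value choice on loops. Then $[\rho_f,\varepsilon](e)=\mathrm{sign}(f(e))$, hence $[\rho_f,\varepsilon](e)f(e)=|f(e)|\in(0,1)$, showing $f\in\Delta^{\rho_f}_\FL(G,\varepsilon)$ and giving the covering $\Delta_\FL(G,\varepsilon)\subseteq\bigcup_{\rho\in\mathcal{O}(G)}\Delta^\rho_\FL(G,\varepsilon)$; the reverse inclusion is immediate since $[\rho,\varepsilon](e)f(e)\in(0,1)$ implies $0<|f(e)|<1$. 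For disjointness, if $f\in\Delta^\rho_\FL(G,\varepsilon)\cap\Delta^{\rho'}_\FL(G,\varepsilon)$, then $[\rho,\varepsilon](e)=\mathrm{sign}(f(e))=[\rho',\varepsilon](e)$ for every $e\in E$, and since $\varepsilon$ is fixed this forces $\rho(v,e)=\rho'(v,e)$ for every incidence, i.e.\ $\rho=\rho'$.

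The argument is essentially a sign-check; the only delicate bookkeeping is verifying that the constructed $\rho_f$ really satisfies conditions (i)--(iii) of the opening definition of an orientation, particularly the ordered double-value $\pm 1$ or $\mp 1$ convention on loops. Once that verification is in place, both statements reduce to one-line inequalities, with no further input beyond the identity $(P_{\rho,\varepsilon}f)(e)=[\rho,\varepsilon](e)f(e)$ and the sign function $\mathrm{sign}\colon\mathbb{R}\setminus\{0\}\to\{\pm 1\}$.
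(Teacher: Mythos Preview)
Your proof is correct and follows essentially the same approach as the paper: the first identity is immediate from $P_{\rho,\varepsilon}f=[\rho,\varepsilon]f$, and for the decomposition the paper likewise constructs the orientation $\rho$ from $f$ by setting $\rho(v,e)=\varepsilon(v,e)$ when $f(e)>0$ and $\rho(v,e)=-\varepsilon(v,e)$ when $f(e)<0$. Your version is somewhat more explicit about disjointness and the loop convention, but the underlying argument is the same sign-check.
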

\begin{proof}
Since $P_{\rho,\varepsilon}f=[\rho,\varepsilon]f$ for $f\in{\Bbb
R}^E$, then the first identity is trivial by definition of
$\Delta^\rho_\FL(G,\varepsilon)$, $\Delta^+_\FL(G,\rho)$, and
Lemma~\ref{Involution-EP}.

Let $f\in\Delta_\FL(G,\varepsilon)$. We define an orientation $\rho$
on $G$ as follows: for each edge $e$ at its one end-vertex $v$, set
$\rho(v,e)=\varepsilon(v,e)$ if $f(e)>0$ and
$\rho(v,e)=-\varepsilon(v,e)$ if $f(e)<0$. Then $f\in
\Delta^\rho_\FL(G,\varepsilon)$. Conversely, each
$\Delta^\rho_\FL(G,\varepsilon)$ is obviously contained in
$\Delta_\FL(G,\varepsilon)$. The union is clearly disjoint.
\end{proof}

Notice that the open convex cone $C^+(G,\varepsilon)$ may be empty
for the given orientation $\varepsilon$. If $(G,\varepsilon)$
contains a directed cut, then it is impossible to have positive real
flows by Proposition~\ref{Characterization-of-Eulerian}(a), thus
$C^+(G,\varepsilon)=\emptyset$. To have
$C^+(G,\varepsilon)\neq\emptyset$, the orientation $\varepsilon$
must be totally cyclic.

\begin{lemma}\label{0-1-Polytope}
{\rm (a)} $\Delta^+_{\FL}(G,\varepsilon)\neq\emptyset$ if and only
if $\varepsilon\in{\mathcal O}_\textsc{tc}(G)$.

{\rm (b)} If $\varepsilon\in{\mathcal O}_\textsc{tc}(G)$, then
$\bar\Delta^+_{\FL}(G,\varepsilon)$ is a $0$-$1$ polytope, i.e., all
its vertices are flows of $(G,\varepsilon)$ with values in
$\{0,1\}$.

{\rm (c)} If $\varepsilon\in{\mathcal O}_\textsc{tc}(G)$, then every
flow of $(G,\varepsilon)$ with values in $\{0,1\}$ is a vertex of
$\bar\Delta^+_{\FL}(G,\varepsilon)$.
\end{lemma}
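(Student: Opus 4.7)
My plan is to handle the three parts in order, with part (a) being the only one requiring any real content and parts (b),(c) following from general polytope facts.

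For part (a), the ``only if'' direction is a direct application of Proposition~\ref{Characterization-of-Eulerian}(a). Suppose $\varepsilon$ is not totally cyclic, so $G$ has a cut $U$ that is directed in $(G,\varepsilon)$. Then $[\varepsilon,\varepsilon_U](e)$ has a constant sign on $U$ (all $+1$ or all $-1$), so any real flow $f$ must satisfy $\sum_{e\in U} f(e) = 0$. This rules out $f(e)>0$ for every $e\in E$, hence $\Delta^+_{\FL}(G,\varepsilon)=\emptyset$. For the converse, assume $\varepsilon\in\mathcal O_\textsc{tc}(G)$, so every edge $e\in E$ lies in a directed circuit $C_e$ of $(G,\varepsilon)$. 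The indicator $\mathbbm 1_{C_e}$ (regarded as a function $E\to\{0,1\}$) is an integer flow of $(G,\varepsilon)$ with $\mathbbm 1_{C_e}(e)=1$. Then $g:=\sum_{e\in E}\mathbbm 1_{C_e}$ is a flow of $(G,\varepsilon)$ taking strictly positive integer values on every edge, and $g/(1+\max_{e}g(e))$ lies in $\Delta^+_{\FL}(G,\varepsilon)$.

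For part (b), note that $\bar\Delta^+_{\FL}(G,\varepsilon)$ is cut out of $\mathbb R^E$ by the system $\bm Mf=0$ (the conservation law) together with the box constraints $0\le f(e)\le 1$. The incidence matrix $\bm M$ of an oriented graph is totally unimodular, and augmenting it by rows of $\pm I$ preserves total unimodularity, so the full constraint matrix is TU with integer right-hand side $(0,\ldots,0,1,\ldots,1,0,\ldots,0)$. By the classical Hoffman--Kruskal theorem, every vertex of $\bar\Delta^+_{\FL}(G,\varepsilon)$ has integer coordinates, and the box inequalities force these coordinates to lie in $\{0,1\}$.

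For part (c), let $f$ be a flow of $(G,\varepsilon)$ with $f(e)\in\{0,1\}$ for all $e\in E$. Suppose $f=\tfrac12(g+h)$ with $g,h\in\bar\Delta^+_{\FL}(G,\varepsilon)$. For each edge $e$, we have $g(e),h(e)\in[0,1]$ and $\tfrac12(g(e)+h(e))=f(e)\in\{0,1\}$, which forces $g(e)=h(e)=f(e)$. Hence $g=h=f$, and $f$ is a vertex.

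There is no real obstacle here; the mild subtlety is merely the invocation of the total unimodularity of $\bm M$ in part (b), but this is entirely standard for the incidence matrix of an oriented graph and requires no new ingredients.
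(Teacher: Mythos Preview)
Your proof is correct and follows essentially the same approach as the paper: total unimodularity of the incidence matrix for (b) and an extremality argument for (c). The only cosmetic differences are that the paper dismisses (a) as trivial, handles (b) by a hands-on linear-programming calculation (showing the defining submatrix is row-equivalent over $\Bbb Z$ to an identity block) rather than invoking Hoffman--Kruskal, and in (c) writes $f$ as a general convex combination of vertices rather than a midpoint; your packaging is if anything cleaner.
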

\begin{proof} (a) Trivial.

(b) Let $f$  be a vertex of $\bar\Delta^+_{\FL}(G,\varepsilon)$. It
is enough to show that $f$ is integer-valued. By Linear Programming
the vertex $(x_e=f(e):e\in E)$ is a unique solution of a linear
system of the form
\[
\begin{array}{rlll} \sum_{e\in E}{\bm m}_{v,e} x_e &=& 0, & v\in V,\\
x_e &=& a_e, & e\in E_f,
\end{array}
\]
where $a_e=0$ or $1$, $E_f$ is an edge subset, $|E_f|=|E|-n(G)$. The
system is equivalent to the linear system
\[
\sum_{e\in E'_f}{\bm m}_{v,e}x_e=b_v,\sp v\in V,
\]
where $E'_f:=E-E_f$, $b_v\in{\Bbb Z}$, and the rank of the matrix
$[{\bm m}_{v,e}]_{V\times E'_f}$ is $n(G)$. Since the incidence
matrix ${\bm M}=[{\bm m}_{v,e}]_{V\times E}$ is totally unimodular
(see \cite{Bondy-Murty1}, p.35), the submatrix $[{\bm
m}_{v,e}]_{V\times E'_f}$ is row equivalent to the matrix
$\big[{I\atop 0}\big]$ over $\Bbb Z$, where $I$ is the identity
matrix when $V$ is linearly labeled. It then follows that the
solution $(x_e=f(e):e\in E)$ is an integer vector.

(c) Notice that a flow of $(G,\varepsilon)$ with values in $\{0,1\}$
is just the characteristic function of the edge set of a directed
Eulerian subgraph of $(G,\varepsilon)$. Let $f$ be a flow with
values in $\{0,1\}$. Suppose $f$ is not a vertex of
$\bar\Delta^+_{\FL}(G,\varepsilon)$. Then there are distinct flows
$f_i$ of $(G,\varepsilon)$ with values in $\{0,1\}$ such that
$f=\sum_{i=1}^ka_if_i$, where $a_i>0$, $\sum_{i=1}^ka_i=1$, $k\geq
2$. Let $e$ be an edge such that $f(e)=1$. Then $f_i(e)=1$ for all
$i$; otherwise, say, $f_1(e)=0$, then
$f(e)=\sum_{i=1}^ka_if_i(e)\leq \sum_{i=2}^ka_i<1$, which
contradicts $f(e)=1$. Thus $f_i=f$ for all $i$; this is
contradictory to the distinctness of $f_i$.
\end{proof}

Recall that $\varphi_{\varepsilon}(G,q)$ for a positive integer $q$
is the number of integer flows of $(G,\varepsilon)$ with values in
$\{1,2,\ldots,q-1\}$. In other words, $\varphi_{\varepsilon}(G,q)$
is the number of integer flows of $(G,\varepsilon)$ with values in
the open interval $(0,q)$. Clearly, $\varphi_{\varepsilon}(G,q)$
counts the number of lattice points of the dilatation
$q\Delta^+_\FL(G,\varepsilon)$, i.e.,
\begin{equation}\label{Phi-Z-Epsilon}
\varphi_{\varepsilon}(G,q)= L(\Delta^+_\FL(G,\varepsilon),q).
\end{equation}
We call $\varphi_{\varepsilon}(G,q)$ the {\em local flow polynomial
of $G$ with respect to $\varepsilon$}. Analogously, let
$\bar{\varphi}_{\varepsilon}(G,q)$ denote the number of integer
flows of $(G,\varepsilon)$ with values in $\{0,1,\ldots,q\}$. In
other words, $\bar{\varphi}_{\varepsilon}(G,q)$ is the number of
integer flows of $(G,\varepsilon)$ with values in the closed
interval $[0,q]$. Then $\bar{\varphi}_{\varepsilon}(G,q)$ counts the
number of lattice points of $q\bar\Delta^+_\FL(G,\varepsilon)$,
i.e.,
\begin{equation}\label{Bar-Phi-Z-Epsilon}
\bar{\varphi}_{\varepsilon}(G,q)=
L(\bar\Delta^+_\FL(G,\varepsilon),q).
\end{equation}
We call $\bar{\varphi}_{\varepsilon}(G,q)$ the {\em local dual flow
polynomial of $G$ with respect to $\varepsilon$}. Now we denote by
$\bar{\varphi}_{\mathbbm z}(G,q)$ the number of pairs $(\rho,f)$,
where $\rho$ is a totally cyclic orientation on $G$ and $f$ is an
integer flow of $(G,\rho)$ with values in $\{0,1,\ldots,q\}$. We
call $\bar{\varphi}_{\mathbbm z}(G,q)$ the {\em dual integral flow
polynomial of $G$}.\vspace{3ex}

{\sc Proof of Theorem~\ref{Integral-Flow-Thm}.} \vspace{2ex}

(a) By Lemma~\ref{0-1-Polytope}, the closure
$\bar\Delta^+_\FL(G,\varepsilon)$ of the open polytope
$\Delta^+_\FL(G,\varepsilon)$ is the convex hull of the lattice
points $f\in{\Bbb Z}^E$ such that $f(e)\in\{0,1\}$ for all $e\in E$
and satisfying (\ref{Balance-Equation}). The Reciprocity Law and the
interpretation of the constant term follow from the Reciprocity Law
and the properties of Ehrhart polynomials.

(b) Note that $F_{\rm nz\mathbbm{
z}}(G,\varepsilon;q)=q\Delta_\FL(G,\varepsilon)$. By
Lemma~\ref{PRE}, we have a disjoint union
\begin{equation}\label{DFL-disjoint}
q\Delta_\FL(G,\varepsilon) = \bigsqcup_{\rho\in {\mathcal
O}_\textsc{tc}(G)} q\Delta^\rho_\FL(G,\varepsilon),
\end{equation}
where each lattice open polytope $\Delta^\rho_\FL(G,\varepsilon)$ is
isomorphic to the 0-1 open polytope $\Delta^+_\FL(G,\rho)$ by the
unimodular transformation $P_{\rho,\varepsilon}$. Then (\ref{FOP})
follows immediately from (\ref{DFL-disjoint}); (\ref{FCP}) follows
from definition of $\bar{\varphi}_{\mathbbm z}(G,q)$. The
Reciprocity Law (\ref{phiRL}) follows from (\ref{FPRL})-(\ref{FCP}).
The interpretation of the constant term $\varphi_{\mathbbm z}(G,0)$
follows from (\ref{FOP}) and
$\varphi_{\varepsilon}(G,0)=(-1)^{n(G)}$. \hfill{$\Box$}

\section{Interpretation of modular flow polynomial}

This section is devoted to interpreting the values of the modular
flow polynomial in a way similar to how the modular tension
polynomial was interpreted in \cite{OLS-I}. For the graph $G=(V,E)$
and a positive integer $q$, there is a modulo $q$ map
\[
\Mod_q:{\Bbb R}^E\rightarrow({\Bbb R}/q{\Bbb Z})^E, \sp
(\Mod_qf)(x)=f(x)\;(\mod q), \sp f\in{\Bbb R}^E.
\]
Then $\Mod_q({\Bbb Z}^E)=({\Bbb Z}/q{\Bbb Z})^E$ is a subgroup of
the toric group $({\Bbb R}/q{\Bbb Z})^E$, and
$\Mod_q(F(G,\varepsilon;\Bbb Z))$ is a subgroup of $\Mod_q(\Bbb
Z^E)$.

Given orientations $\rho,\sigma\in\mathcal O(G)$; there is an
involution $Q_{\rho,\sigma}:[0,q]^E\rightarrow[0,q]^E$ defined by
\[
(Q_{\rho,\sigma}g)(e)=\left\{\begin{array}{rl}
g(e)   & \mbox{if $\rho(v,e)=\sigma(v,e)$,}\\
q-g(e) & \mbox{if $\rho(v,e)\neq\sigma(v,e)$,}
\end{array}\right.
\]
where $g\in[0,q]^E$ and $v$ is an end-vertex of the edge $e$.
Clearly, $Q_{\rho,\sigma}$ is a bijection from $[0,q]^E$ to
$[0,q]^E$, and is also a bijection from $(0,q)^E$ to $(0,q)^E$,
where $(0,q)=\{x\in{\Bbb R}\:|\:0<x<q\}$. Moreover,
$Q_{\rho,\sigma}Q_{\sigma,\varepsilon}=Q_{\rho,\varepsilon}$.

Recall that two orientations $\varepsilon_1,\varepsilon_2\in\mathcal
O(G)$ are said to be {\em Eulerian equivalent}, written
$\varepsilon_1\sim\varepsilon_2$, if the induced spanning subdigraph
by the edge subset
\[
E(\varepsilon_1\neq\varepsilon_2):= \{e\in E\:|\:
\varepsilon_1(v,e)\neq \varepsilon_2(v,e),\mbox{$v$ is an end-vertex
of $e$}\}
\]
is a directed Eulerian subgraph with the orientation either
$\varepsilon_1$ or $\varepsilon_2$.

\begin{lemma}\label{QRE}
\begin{enumerate}
\item[(a)] The relation $\sim$ is an equivalence relation on ${\mathcal O}(G)$.

\item[(b)] Let $\rho,\sigma\in\mathcal O(G)$ be Eulerian equivalent. If
$\rho$ is totally cyclic, so is $\sigma$.

\item[(c)] Let $\rho,\sigma\in\mathcal O(G)$ and
$\rho\sim\sigma$. Then $Q_{\rho,\sigma}:
q\bar\Delta^+_{\FL}(G,\sigma) \rightarrow q\bar
\Delta^+_{\FL}(G,\rho)$ is a bijection, sending lattice points to
lattice points. In particular,
\[
Q_{\rho,\sigma}(q\Delta^+_{\FL}(G,\sigma)) =
q\Delta^+_{\FL}(G,\rho),
\]
\[
 \varphi_{\rho}(G,q) =\varphi_{\sigma}(G,q),
\]
\[
\bar\varphi_{\rho}(G,\varepsilon;q) =\bar\varphi_{\sigma}(G,q).
\]
\end{enumerate}
\end{lemma}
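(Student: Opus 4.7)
The plan is to prove the three parts in sequence, reducing (b) and (c) to a single conservation-law identity that describes how $Q_{\rho,\sigma}$ interacts with flows.

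For (a), reflexivity and symmetry are immediate from the definitions, since $E(\varepsilon\neq\varepsilon)=\emptyset$ and $E(\varepsilon_1\neq\varepsilon_2)=E(\varepsilon_2\neq\varepsilon_1)$. For transitivity, assuming $\rho\sim\sigma$ and $\sigma\sim\tau$, I would first record the set-theoretic identity
\[
E(\rho\neq\tau)=E(\rho\neq\sigma)\mathbin{\triangle}E(\sigma\neq\tau),
\]
which holds because the three orientations take values in $\{\pm 1\}$ at each end-vertex: when $\rho\neq\sigma$ and $\sigma\neq\tau$ simultaneously, necessarily $\rho=\tau$. Writing $A=E(\rho\neq\sigma)\setminus E(\sigma\neq\tau)$, $B=E(\sigma\neq\tau)\setminus E(\rho\neq\sigma)$, and $C=E(\rho\neq\sigma)\cap E(\sigma\neq\tau)$, I would then verify $\sum_{e\in A\cup B}\rho(v,e)=0$ for every $v\in V$ by combining $\sum_{A\cup C}\rho(v,e)=0$ (from $\rho\sim\sigma$) with $\sum_{B\cup C}\sigma(v,e)=0$ (from $\sigma\sim\tau$), using $\sigma=\rho$ on $B$ and $\sigma=-\rho$ on $C$ to convert the second sum into a $\rho$-sum. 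This is exactly the $\rho$-Eulerian condition for $E(\rho\neq\tau)$, giving $\rho\sim\tau$.

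The backbone for (b) and (c) is the vertex-wise identity
\[
\sum_{e\in E}\rho(v,e)(Q_{\rho,\sigma}g)(e)=\sum_{e\in E}\sigma(v,e)g(e)\;-\;q\sum_{e\in E(\rho\neq\sigma)}\sigma(v,e),
\]
valid for any $g\in[0,q]^E$ and obtained by splitting the sum along the partition $E=E(\rho=\sigma)\sqcup E(\rho\neq\sigma)$ and substituting $\rho(v,e)=\pm\sigma(v,e)$ on each piece. When $\rho\sim\sigma$, the correction term vanishes (the sum $\sum_{E(\rho\neq\sigma)}\sigma(v,e)$ differs from $\sum_{E(\rho\neq\sigma)}\rho(v,e)=0$ only by an overall sign). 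Hence $Q_{\rho,\sigma}$ carries $\sigma$-flows in $[0,q]^E$ into $\rho$-flows in $[0,q]^E$. Part (b) then follows by applying the swapped version of this statement with $q=1$: a positive flow $f\in\Delta^+_{\FL}(G,\rho)$ exists by Lemma \ref{0-1-Polytope}(a), and $Q_{\sigma,\rho}f$ is a flow of $(G,\sigma)$ whose entries, each of the form $f(e)$ or $1-f(e)$, all lie in $(0,1)$; so $\Delta^+_{\FL}(G,\sigma)\neq\emptyset$ and $\sigma\in{\mathcal O}_{\textsc{tc}}(G)$.

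For part (c), the same identity shows that $Q_{\rho,\sigma}$ sends $q\bar\Delta^+_{\FL}(G,\sigma)$ into $q\bar\Delta^+_{\FL}(G,\rho)$ and, by restriction, $q\Delta^+_{\FL}(G,\sigma)$ into $q\Delta^+_{\FL}(G,\rho)$; the composition rule $Q_{\rho,\sigma}Q_{\sigma,\rho}=Q_{\rho,\rho}=\id$ then upgrades both to bijections with inverse $Q_{\sigma,\rho}$. Since for integer $q$ the map $Q_{\rho,\sigma}$ is integer-affine, it preserves the lattice $\Bbb Z^E$, and the counting equalities $\varphi_\rho(G,q)=\varphi_\sigma(G,q)$ and $\bar\varphi_\rho(G,q)=\bar\varphi_\sigma(G,q)$ are immediate. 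The only step requiring genuine care is the symmetric-difference bookkeeping in the transitivity argument for (a); the rest is a straightforward unpacking of the displayed identity for $Q_{\rho,\sigma}$.
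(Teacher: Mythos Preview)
Your proof is correct. Parts (a) and (c) follow essentially the same route as the paper: the transitivity argument via the symmetric-difference decomposition $E(\rho\neq\tau)=E(\rho\neq\sigma)\mathbin{\triangle}E(\sigma\neq\tau)$ is exactly the paper's computation (just with cleaner bookkeeping), and your vertex-wise identity for $Q_{\rho,\sigma}$ is the same one the paper derives in its proof of (c), differing only by the harmless substitution $-\sigma(v,e)=\rho(v,e)$ on $E(\rho\neq\sigma)$ in the correction term.

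Part (b), however, is handled differently. The paper argues on the cut side: assuming one orientation has a directed cut $U$, it invokes Proposition~\ref{Characterization-of-Eulerian}(b) to show $U\cap E(\rho\neq\sigma)=\emptyset$ (a directed Eulerian subgraph cannot meet a directed cut nontrivially), whence $U$ remains a directed cut in the other orientation. You instead argue on the flow side: total cyclicity of $\rho$ gives a strictly positive $\rho$-flow $f$ by Lemma~\ref{0-1-Polytope}(a), and your identity with $q=1$ shows $Q_{\sigma,\rho}f$ is a strictly positive $\sigma$-flow, so $\sigma$ is totally cyclic. Your approach has the virtue of reusing the same conservation identity that drives (c), so (b) and (c) become a single computation; the paper's cut argument is more self-contained (it does not need Lemma~\ref{0-1-Polytope}) but introduces a separate ingredient. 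Both are short and valid.
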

\begin{proof}
(a) The reflexivity and the symmetric property are obvious.
Transitivity is a straightforward computation. Note that a digraph
$(H,\rho)$ is Eulerian if and only if for all $v\in V(H)$,
\[
\sum_{e\in E(H)}\rho(v,e)=0.
\]
Let $\varepsilon_i\in{\mathcal O}(G)$ ($i=1,2,3$) be such that
$\varepsilon_1\sim\varepsilon_2$ and
$\varepsilon_2\sim\varepsilon_3$. Then
\[
\sum_{e\in
E(\varepsilon_1\neq\varepsilon_2)}\varepsilon_2(v,e)=\sum_{e\in
E(\varepsilon_2\neq\varepsilon_3)}\varepsilon_2(v,e)=0.
\]
Thus
\[
\begin{split}
\sum_{e\in E(\varepsilon_1\neq\varepsilon_3)}\varepsilon_1(v,e) &=
\sum_{e\in E(\varepsilon_1=\varepsilon_2\neq\varepsilon_3)}
\varepsilon_1(v,e)
+\sum_{e\in E(\varepsilon_1\neq\varepsilon_2=\varepsilon_3)}
\varepsilon_1(v,e)\\
&=\sum_{e\in E(\varepsilon_1=\varepsilon_2\neq\varepsilon_3)}
\varepsilon_2(v,e) -\sum_{e\in
E(\varepsilon_1\neq\varepsilon_2=\varepsilon_3)}
\varepsilon_2(v,e)\\
&= \sum_{e\in E(\varepsilon_1=\varepsilon_2\neq\varepsilon_3)\sqcup
E(\varepsilon_1\neq\varepsilon_2\neq\varepsilon_3)}
\varepsilon_2(v,e) \\
& \hspace{3ex}-\sum_{e\in
E(\varepsilon_1\neq\varepsilon_2=\varepsilon_3)\sqcup
E(\varepsilon_1\neq\varepsilon_2\neq\varepsilon_3)}
\varepsilon_2(v,e)\\
&= \sum_{e\in E(\varepsilon_2\neq\varepsilon_3)} \varepsilon_2(v,e)
-\sum_{e\in E(\varepsilon_1\neq\varepsilon_2)} \varepsilon_2(v,e)=0.
\end{split}
\]
This means that $\varepsilon_1$ is Eulerian equivalent to
$\varepsilon_3$.

(b) Suppose $(G,\rho)$ contains a directed cut $(U,\varepsilon_U)$,
where $U=[S,S^c]$ with $S\subseteq V$. Since $E(\rho\neq\sigma)$ is
directed Eulerian with the orientation $\rho$, then by
Proposition~\ref{Characterization-of-Eulerian}(b),
\[
\sum_{x\in U\cap E(\rho\neq\sigma)}[\rho,\varepsilon_U](x)=0.
\]
Since $[\rho,\varepsilon_U]\equiv 1$, it follows that $U\cap
E(\rho\neq\sigma)=\emptyset$. This means that
$\rho(v,e)=\sigma(v,e)$ for all edges $e\in U$, where $v$ is an
end-vertex of $e$ and $v\in S$. So $(U,\varepsilon_U)$ is a directed
cut of $(G,\varepsilon)$. This is a contradiction.

(c) For a flow $f\in q\bar\Delta^+_{\FL}(G,\sigma)$ ($f\in
q\Delta^+_{\FL}(G,\sigma)$), we have
\begin{eqnarray*}
\sum_{e\in E(v)}\rho(v,e)(Q_{\rho,\sigma}f)(e) &=& \sum_{e\in
E(v)}\sigma(v,e)f(e) +q\sum_{e\in E(\sigma\neq\rho)} \rho(v,e)=0.
\end{eqnarray*}
This shows that $Q_{\rho,\sigma}f\in q\bar\Delta_{\FL}^+(G,\rho)$
($Q_{\rho,\sigma}f\in q\Delta_{\FL}^+(G,\rho)$). Clearly,
$Q_{\rho,\sigma}$ sends lattice points to lattice points by
definition. Therefore,
$\bar\varphi_{\rho}(G,q)=\bar\varphi_{\sigma}(G,q)$ and
$\varphi_{\rho}(G,q)=\varphi_{\sigma}(G,q)$.
\end{proof}

For two Eulerian equivalent orientations $\rho,\sigma$, we have seen
that the digraph $(G,\rho)$ contains no directed cut if and only if
$(G,\sigma)$ contains no directed cut. This means that $\sim$
induces an equivalence relation on ${\mathcal O}_\textsc{tc}(G)$;
and each equivalence class of $\sim$ in ${\mathcal
O}_\textsc{tc}(G)$ is an equivalence class of $\sim$ in $\mathcal
O(G)$. We denote by $[{\mathcal O}_\textsc{tc}(G)]$ the quotient set
${\mathcal O}_\textsc{tc}(G)/\!\!\sim$ of Eulerian equivalence
classes. For each $\rho\in {\mathcal O}_\textsc{tc}(G)$, let
$[\rho]\in[{\mathcal O}_\textsc{tc}(G)]$ denote the equivalence
class of $\rho$, and define
\[
\varphi_{[\rho]}\bigl(G,q\bigr)=\varphi_{\rho}(G,q) \sp\mbox{and}\sp
\bar\varphi_{[\rho]}\bigl(G,q\bigr)=\bar\varphi_{\rho}(G,q).
\]
The following nontrivial Lemma is due to Tutte. It is crucial to the
proof of our main result Theorem~\ref{Modular-Flow-Thm}; so we
present a proof with our notations.

\begin{lemma}[Tutte \cite{Tutte1}]\label{Surjective}
The map $\Mod_q: F_{\mathbbm z}(G,\varepsilon;q)\rightarrow
F(G,\varepsilon;{\Bbb Z}/q{\Bbb Z})$ and its restriction $\Mod_q:
F_{\rm nz\mathbbm z}(G,\varepsilon;q)\rightarrow F_{\rm
nz}(G,\varepsilon;{\Bbb Z}/q{\Bbb Z})$ are surjective.
\end{lemma}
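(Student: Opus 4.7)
The plan is to construct, for each mod-$q$ flow $\bar f\in F(G,\varepsilon;\Bbb Z/q\Bbb Z)$, an integer flow $f\in F_{\mathbbm z}(G,\varepsilon;q)$ that reduces to $\bar f$, in two stages: (i) produce some integer lift (with no a priori bound on edge values), then (ii) push the values into the strip $|f(e)|<q$ by repeatedly subtracting $q$ times a flow supported on a single directed circuit.

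For stage (i), fix a maximal spanning forest $T$ as in Lemma~\ref{Rank-Flow-Space}. The representation $f=\sum_{e\in T^c}f(e)[\varepsilon,\rho_e]$ is valid with coefficients in any abelian group, so $\bar f$ is determined uniquely by its values on $T^c$. I choose for each $e\in T^c$ an integer representative $a_e\in\{0,1,\dots,q-1\}$ of $\bar f(e)$ and set $f_0:=\sum_{e\in T^c}a_e[\varepsilon,\rho_e]\in F(G,\varepsilon;\Bbb Z)$. Since $f_0$ and $\bar f$ are flows agreeing modulo $q$ on $T^c$, uniqueness of the circuit representation forces $f_0\equiv\bar f\pmod q$ on all of $E$.

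For stage (ii), among all integer lifts $f$ of $\bar f$, I choose one minimizing the pair $(M(f),|E_M(f)|)$ lexicographically, where $M(f):=\max_{e\in E}|f(e)|$ and $E_M(f):=\{e\in E\,:\,|f(e)|=M(f)\}$. Suppose for contradiction that $M:=M(f)\geq q$, and fix $e_0\in E_M(f)$. Let $D$ be the digraph obtained from $(G,\varepsilon)$ by reversing every edge $e$ with $f(e)<0$, so that $|f|$ is a nonnegative integer circulation on $D$. A standard reachability argument (the set of vertices reachable in the support of $|f|$ from the head of $e_0$ must contain its tail, for otherwise $|f|$ would have positive net in-flow across that set while having zero out-flow) produces a directed circuit $C$ of $D$ through $e_0$ contained in the support of $|f|$. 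Let $\chi_C\in F(G,\varepsilon;\Bbb Z)$ denote the $\{-1,0,1\}$-valued flow equal to $\mathrm{sign}\,f(e)$ for $e\in C$ and zero elsewhere, and put $f':=f-q\chi_C$. Then $f'\equiv f\equiv\bar f\pmod q$, and for $e\in C$ a case analysis yields $|f'(e)|=\big||f(e)|-q\big|<M$ (either $|f(e)|\geq q$ and $|f'(e)|=|f(e)|-q<M$, or $|f(e)|<q$ and $|f'(e)|=q-|f(e)|<q\leq M$); for $e\notin C$ nothing changes. Hence $M(f')\leq M$ while $E_M(f')\subsetneq E_M(f)$, contradicting minimality.

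The restriction to nowhere-zero flows is automatic: if $\bar f(e)\neq 0$ in $\Bbb Z/q\Bbb Z$, then the lift $f\in F_{\mathbbm z}(G,\varepsilon;q)$ produced above has $f(e)\neq 0$, since the only integer in $(-q,q)$ congruent to $0$ modulo $q$ is $0$. The main obstacle is the circuit step in stage (ii): one must produce a directed circuit through the prescribed edge $e_0$ \emph{inside the support of} $|f|$, and verify that aligning $\chi_C$ with the sign of $f$ on each edge of $C$ (which is exactly what reversing $D$ accomplishes) yields the controlled drop $|f'(e)|=\big||f(e)|-q\big|$ rather than an uncontrolled sum of the two magnitudes.
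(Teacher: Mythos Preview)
Your proof is correct, but it takes a genuinely different route from the paper's. The paper fixes the values of the candidate lift in $\{0,1,\dots,q-1\}$ (identifying $\Bbb Z/q\Bbb Z$ with that set) and varies the \emph{orientation}: it minimizes the total conservation defect
\[
\eta(g,\rho)=\sum_{v\in V}\Big|\sum_{e\in E}\rho(v,e)g(e)\Big|
\]
over all orientations $\rho$, and if the minimum is positive it finds a directed path between a vertex of positive defect and one of negative defect, reverses the orientation along that path, and drops $\eta$ by $2q$. You do the dual thing: you keep the conservation law satisfied at every step (working with genuine integer flows) and instead push down the \emph{edge values}, minimizing $(\max_e|f(e)|,\;|E_M(f)|)$ and subtracting $q$ times a signed circuit through a maximal edge. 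Your argument is essentially the conformal circuit decomposition of integer flows and is arguably more elementary and self-contained; the paper's version, by contrast, meshes directly with the orientation machinery ($P_{\rho,\varepsilon}$, $Q_{\rho,\varepsilon}$) used throughout the rest of Section~5. A minor structural difference: the paper proves the nowhere-zero case first and deduces the general case by restricting to the support, whereas you prove the general case and observe that nowhere-zero is automatic for the lift.
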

\begin{proof}
The second part of the lemma implies the first part. In fact, every
flow $f\in F(G,\varepsilon;{\Bbb Z}q{\Bbb Z})$ can be viewed as a
nowhere-zero flow $f|_{E_f}$ on the subdigraph
$(V,E_f,\varepsilon)$, where $E_f:=\{e\in E\:|\: f(e)\neq 0\}$. Let
$g$ be a nowhere-zero integer $q$-flow on $(V,E_f,\varepsilon)$ such
that $\Mod_q(g)=f|_{E_f}$. Then $g$ is extended to an integer
$q$-flow on $(G,\varepsilon)$ by setting $g\equiv1$ on $E-E_f$.

Now for each $f\in F_{\mathbbm z}(G,\varepsilon;q)$ we write $\tilde
f=\Mod_qf$. We identify ${\Bbb Z}/q{\Bbb Z}$ with the set
$\{0,1,\ldots,q-1\}$ and view each modular flow $g\in
F(G,\varepsilon;{\Bbb Z}/q{\Bbb Z})$ as an integer-valued function
$g:E\rightarrow\{0,1,\ldots,q-1\}$. Then the map
$Q_{\rho,\varepsilon}$ maps $F(G,\varepsilon;{\Bbb Z}/q{\Bbb Z})$ to
$F(G,\rho;{\Bbb Z}/q{\Bbb Z})$, and $F_{\rm nz\mathbbm
z}(G,\varepsilon;{\Bbb Z}/q{\Bbb Z})$ to $F_{\rm nz\mathbbm
z}(G,\rho;{\Bbb Z}/q{\Bbb Z})$. For each $g\in[0,q]^E$ and
$\rho\in{\mathcal O}(G)$, we define
\[
\eta(g,\rho):=\sum_{v\in V} \Big|\sum_{e\in E}\rho(v,e)g(e)\Big|.
\]
Fix a modular flow $\tilde f\in F_{\rm nz\mathbbm
z}(G,\varepsilon;{\Bbb Z}/q{\Bbb Z})$. Let $\rho^*$ be a particular
orientation on $G$ such that
\[
\eta(Q_{\rho^*,\varepsilon}\tilde f,\rho^*) =\min\{
\eta(Q_{\rho,\varepsilon}\tilde f,\rho)\:|\: \rho\in{\mathcal
O}(G)\}.
\]
We write $f^*:=Q_{\rho^*,\varepsilon}\tilde f$ and define
$f:=P_{\varepsilon,\rho^*}f^*$.

If $\eta(f^*,\rho^*)=0$, then $\sum_{e\in E}\rho^*(v,e)f^*(e)=0$ for
all $v\in V$. This means that $f^*$ is an integer $q$-flow of
$(G,\rho^*)$. Whence $f$ is an integer $q$-flow of
$(G,\varepsilon)$. By definition of $P_{\varepsilon,\rho^*}$ and
$Q_{\rho^*,\varepsilon}$, we see that
\begin{eqnarray*}
f(e) &=& \left\{
\begin{array}{ll}
\tilde f(e) & \mbox{if $\varepsilon(v,e)=\rho^*(v,e)$}\\
\tilde f(e)-q & \mbox{if $\varepsilon(v,e)\neq\rho^*(v,e)$}
\end{array}\right.\\ \\
&=& \tilde f(e)\;(\mod q) \sp \mbox{for all $e\in E$.}
\end{eqnarray*}
The surjectivity of $\Mod_q: F_{\rm nz\mathbbm
z}(G,\varepsilon;q)\rightarrow F_{\rm nz}(G,\varepsilon;{\Bbb
Z}/q{\Bbb Z})$ follows immediately.

We now claim that $\eta(f^*,\rho^*)=0$. Suppose
$\eta(f^*,\rho^*)>0$. Then there exist a vertex $u\in V$ and an
integer $k$ such that
\begin{equation}\label{RF*}
\sum_{e\in E}\rho^*(u,e)f^*(e)=kq >0\hspace{1ex} (<0).
\end{equation}
Note that for any function $g:E\rightarrow A$, where $A$ is an
abelian group, and for any orientation $\rho\in\mathcal O(G)$, we
have
\[
\sum_{v\in V} \sum_{e\in E} \rho(v,e) g(e)=0.
\]
In particular, for the function $f^*$ and the orientation $\rho^*$,
we have
\[
\sum_{v\in V} \sum_{e\in E} \rho^*(v,e) f^*(e)=0.
\]
Since (\ref{RF*}) and $f^*$ is a modular flow, there exists a vertex
$w$ such that
\[
\sum_{e\in E}\rho^*(w,e)f^*(e)=-lq<0\hspace{1ex} (>0).
\]
Thus there is a path $P=v_0e_1v_1\cdots e_nv_n$ with $v_0=u$ and
$v_n=w$, such that $\rho^*(v_0,e_1)=1$, $\rho^*(v_n,e_n)=-1$, and
$\rho^*(v_i,e_i)\rho^*(v_i,e_{i+1})=-1$, where $1\leq i\leq n-1$.
Let $\rho^{**}$ be an orientation on $G$ given by
\[
\rho^{**}(v,e)=\left\{
\begin{array}{rl}
-\rho^*(v,e) & \mbox{if $e=e_i$ for some $1\leq i\leq n$,}\\
 \rho^*(v,e)  & \mbox{otherwise.}
\end{array}\right.
\]
We write $f^{**}:=Q_{\rho^{**},\rho^*}f^*$. Then
$f^{**}=Q_{\rho^{**},\rho^*}Q_{\rho^*,\varepsilon}\tilde
f=Q_{\rho^{**},\varepsilon}\tilde f$.

Notice that at each vertex $v\in V$, we have
\[
\sum_{e\in E}\rho^{**}(v,e)f^{**}(e)= \sum_{e\in E}\rho^*(v,e)f^*(e)
-q\sum_{e\in E(\rho^*\neq\rho^{**})} \rho^{*}(v,e).
\]
In particular, for the vertices $u,w$, and other vertices $v$
different from $u$ and $w$, we have
\[
\sum_{e\in E}\rho^{**}(u,e)f^{**}(e)= \sum_{e\in E}\rho^*(u,e)f^*(e)
-q=(k-1)q,
\]
\[
\sum_{e\in E}\rho^{**}(w,e)f^{**}(e)= \sum_{e\in E}\rho^*(w,e)f^*(e)
+q=(1-l)q,
\]
\[
\sum_{e\in E}\rho^{**}(v,e)f^{**}(e)= \sum_{e\in
E}\rho^*(v,e)f^*(e).
\]
It follows that
\[
\eta(f^{**},\rho^{**})=\eta(f^*,\rho^*)-2q<\eta(f^*,\rho^*).
\]
This is contradictory to the selection of $\rho^*$ that
$\eta(f^*,\rho^*)$ is minimum.
\end{proof}\vspace{1ex}

For each real-valued function $f:E\rightarrow{\Bbb R}$ and any
orientation $\rho$ on $G$, we associate with $f$ and $\rho$ an
orientation $\rho_f$, defined for each $(v,e)\in V\times E$ by
\begin{equation}\label{DF}
\rho_f(v,e)=\left\{
\begin{array}{rl}
\rho(v,e)  & \mbox{if $f(e)>0$,}\\
-\rho(v,e) & \mbox{if $f(e)\leq0$}.
\end{array}\right.
\end{equation}
For two orientations $\rho,\sigma\in{\mathcal O}(G)$, we associate a
{\em symmetric difference function}
$I_{\rho,\sigma}:E\rightarrow\{0,1\}$, defined for each edge $e\in
E$ (and its one end-vertex $v$) by
\begin{equation}\label{FD}
I_{\rho,\sigma}(e)=\left\{
\begin{array}{rl}
0  & \mbox{if $\rho(v,e)=\sigma(v,e)$,}\\
1 & \mbox{if $\rho(v,e)\neq\sigma(v,e)$.}
\end{array}\right.
\end{equation}

\begin{lemma}\label{Eulerian}
Let $f_1,f_2\in F(G,\varepsilon;q)$. If $f_1(e)\equiv f_2(e)\:(\mod
q)$ for all edges $e\in E$, then  $\varepsilon_{f_1}$ and
$\varepsilon_{f_2}$ are Eulerian equivalent.
\end{lemma}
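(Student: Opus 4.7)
The plan is to analyze the edge set $E^*:=E(\varepsilon_{f_1}\neq\varepsilon_{f_2})$ directly using the congruence. Since $f_1,f_2$ are real $q$-flows, $|f_i(e)|<q$ and therefore $|f_1(e)-f_2(e)|<2q$; combined with $f_1(e)-f_2(e)\in q{\Bbb Z}$, this forces $f_1(e)-f_2(e)\in\{-q,0,q\}$. By the definition (\ref{DF}), $e\in E^*$ holds precisely when one of $f_1(e),f_2(e)$ is positive and the other is nonpositive; in that situation the sign of $f_1(e)-f_2(e)$ is determined, giving $f_1(e)-f_2(e)=q$ when $f_1(e)>0\geq f_2(e)$ and $f_1(e)-f_2(e)=-q$ when $f_2(e)>0\geq f_1(e)$. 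For $e\notin E^*$ both $f_i(e)$ lie in the same open half of $(-q,q)$, so $f_1(e)=f_2(e)$.

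The crux of the argument is the pointwise identity
\[
q\,\varepsilon_{f_1}(v,e)=\varepsilon(v,e)\bigl(f_1(e)-f_2(e)\bigr),\sp e\in E^*,
\]
valid at each end-vertex $v$ of $e$. I would verify this by inspecting the two subcases: if $f_1(e)>0\geq f_2(e)$, then $\varepsilon_{f_1}(v,e)=\varepsilon(v,e)$ and $f_1(e)-f_2(e)=q$, so both sides equal $q\varepsilon(v,e)$; if $f_2(e)>0\geq f_1(e)$, then $\varepsilon_{f_1}(v,e)=-\varepsilon(v,e)$ and $f_1(e)-f_2(e)=-q$, so both sides equal $-q\varepsilon(v,e)$.

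Once the identity is in hand, the rest is immediate: summing over $e\in E^*$ and using $f_1(e)-f_2(e)=0$ off $E^*$ together with the Conservation Law~(\ref{Balance-Equation}) for $f_1$ and $f_2$ with respect to $\varepsilon$,
\[
q\sum_{e\in E^*}\varepsilon_{f_1}(v,e)
=\sum_{e\in E}\varepsilon(v,e)\bigl(f_1(e)-f_2(e)\bigr)=0
\]
for every $v\in V$. Proposition~\ref{Characterization-of-Eulerian}(b) then forces the spanning subgraph on $E^*$ to be directed Eulerian with respect to $\varepsilon_{f_1}$, which is precisely the assertion $\varepsilon_{f_1}\sim\varepsilon_{f_2}$.

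The main obstacle I anticipate is keeping the sign conventions of $\varepsilon_{f_i}$, $\varepsilon$, and the tie-breaking rule at $f_i(e)=0$ coherent across the two subcases; once that bookkeeping is handled cleanly, the identity collapses the two cases into a single formula and the Conservation Law does the rest.
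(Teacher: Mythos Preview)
Your argument is correct and follows essentially the same route as the paper: both proofs establish $\sum_{e\in E(\varepsilon_{f_1}\neq\varepsilon_{f_2})}\varepsilon_{f_i}(v,e)=0$ at every vertex by exploiting $f_1(e)-f_2(e)\in\{-q,0,q\}$ and the Conservation Law. The paper's version passes through the auxiliary flows $g_i=P_{\varepsilon_i,\varepsilon}f_i$ (flows with respect to $\varepsilon_i$) and applies the Conservation Law for $g_i$, whereas you work directly with $f_1-f_2$ as a flow with respect to $\varepsilon$; your pointwise identity $q\,\varepsilon_{f_1}(v,e)=\varepsilon(v,e)(f_1(e)-f_2(e))$ on $E^*$ packages the case analysis more compactly and avoids the extra change of orientation, but the underlying computation is the same.

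One small citation point: the final step does not actually use Proposition~\ref{Characterization-of-Eulerian}(b), which is the directed-cut characterization. What you have proved is $\sum_{e\in E^*}\varepsilon_{f_1}(v,e)=0$ for all $v$, and this is precisely the in-degree $=$ out-degree condition, i.e., the \emph{definition} of directed Eulerian stated in the introduction (and used verbatim in the proof of Lemma~\ref{QRE}(a)). So you may simply conclude directly from the definition rather than invoking that proposition.
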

\begin{proof}
Let us simply write $\varepsilon_i:=\varepsilon_{f_i}$, $i=1,2$. It
suffices to show that at each vertex $v$,
\begin{equation}\label{E1E2}
\sum_{e\in E(\varepsilon_{1}\neq\varepsilon_{2})}
\varepsilon_2(v,e)=0.
\end{equation}
Since $f_1(e)\equiv f_2(e)$ $(\mod q)$ for all $e\in E$, then
\[
f_1(e)=f_2(e)+a_eq,\sp e\in E,
\]
where $a_e\in\{-1,0,1\}$. More precisely, for each edge $e$ at its
one end-vertex $v$,
\[
f_1(e)=\left\{\begin{array}{ll}
 f_2(e) & \mbox{if $\varepsilon_1(v,e)=\varepsilon_2(v,e)$,}\\
 f_2(e)-q & \mbox{if $\varepsilon_1(v,e)\neq\varepsilon_2(v,e)=\varepsilon(v,e)$,}\\
 f_2(e)+q & \mbox{if
 $\varepsilon_1(v,e)\neq \varepsilon_2(v,e)\neq\varepsilon(v,e)$.}
\end{array}\right.
\]
Let $g_i=P_{\varepsilon_i,\varepsilon}f_i$, $i=1,2$. Then $g_i$ are
real $q$-flows of $(G,\varepsilon_i)$, and for an edge $e$ at its
one end-vertex $v$,
\[
g_1(e)=\left\{\begin{array}{rl}
g_2(e)   & \mbox{if $\varepsilon_1(v,e)=\varepsilon_2(v,e)$,}\\
q-g_2(e) & \mbox{if $\varepsilon_1(v,e)\neq\varepsilon_2(v,e)$.}
\end{array}\right.
\]
Note that at each vertex $v\in V$,
\begin{equation}\label{g12}
\sum_{e\in E}\varepsilon_i(v,e)g_i(e)=0, \sp i=1,2.
\end{equation}
Now consider the case of $i=1$ in (\ref{g12}). Replace $g_1(e)$ by
$g_2(e)$ if $\varepsilon_1(v,e)=\varepsilon_2(v,e)$ and by
$q-g_2(e)$ if $\varepsilon_1(v,e)\neq\varepsilon_2(v,e)$; we obtain
\begin{eqnarray*}
\lefteqn{\sum_{e\in E(\varepsilon_1=\varepsilon_2)}
\varepsilon_2(v,e)g_2(e)
-\sum_{e\in E(\varepsilon_1\neq\varepsilon_2)} \varepsilon_2(v,e)(q-g_2(e))}\\
&& \hspace{5mm} =\sum_{e\in E} \varepsilon_2(v,e)g_2(e) -q\sum_{e\in
E(\varepsilon_1\neq\varepsilon_2)} \varepsilon_2(v,e)=0.
\end{eqnarray*}
Applying (\ref{g12}) for $i=2$, we see that (\ref{E1E2}) is true.
\end{proof}

\begin{lemma}\label{Minverse}
Let $\rho,\sigma,\omega\in {\mathcal O}_\textsc{tc}(G)$ be Eulerian
equivalent orientations, and let $f\in
q\Delta^\rho_{\FL}(G,\varepsilon)$ be a real $q$-flow. Then
\begin{enumerate}
\item[(a)] $\varepsilon_f=\rho$.

\item[(b)] $P_{\varepsilon,\sigma} Q_{\sigma,\rho} P_{\rho,\varepsilon}
(q\Delta^\rho_{\FL}(G,\varepsilon))
=q\Delta^\sigma_{\FL}(G,\varepsilon)$.

\item[(c)] $P_{\varepsilon,\sigma} Q_{\sigma,\rho}
P_{\rho,\varepsilon}f=P_{\varepsilon,\omega} Q_{\omega,\rho}
P_{\rho,\varepsilon}f$ if and only if $\sigma=\omega$.

\item[(d)] $F(G,\varepsilon;q)\cap \Mod_q^{-1}\bigl(\Mod_q f\bigr) =
\{P_{\varepsilon,\alpha} Q_{\alpha,\rho} P_{\rho,\varepsilon}f\:|\:
\alpha\sim\rho\}$.
\end{enumerate}
\end{lemma}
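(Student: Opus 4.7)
The plan is to handle parts (a)--(c) as short computations and then to combine them in (d). For (a), membership $f\in q\Delta^\rho_\FL(G,\varepsilon)$ forces $0<[\rho,\varepsilon](e)f(e)<q$ for every edge $e$, so $f(e)$ and $[\rho,\varepsilon](e)$ have the same sign; since $[\rho,\varepsilon](e)=\rho(v,e)\varepsilon(v,e)$ at any endpoint $v$, the defining rule~\eqref{DF} yields $\varepsilon_f(v,e)=\rho(v,e)$, i.e.\ $\varepsilon_f=\rho$. For (b), I would write the composition as the chain
\[
q\Delta^\rho_\FL(G,\varepsilon) \xrightarrow{P_{\rho,\varepsilon}} q\Delta^+_\FL(G,\rho) \xrightarrow{Q_{\sigma,\rho}} q\Delta^+_\FL(G,\sigma) \xrightarrow{P_{\varepsilon,\sigma}} q\Delta^\sigma_\FL(G,\varepsilon),
\]
invoking Lemma~\ref{PRE} (scaled by $q$) for the two outer arrows and Lemma~\ref{QRE}(c) for the middle arrow, whose hypotheses $\sigma\sim\rho$ and $\sigma\in\mathcal O_\textsc{tc}(G)$ are part of the statement.

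For (c), set $g:=P_{\rho,\varepsilon}f$, so $g\in q\Delta^+_\FL(G,\rho)$ with $0<g(e)<q$ on every edge. Unwinding the definitions of $P$ and $Q$ gives the edge-wise formula
\[
(P_{\varepsilon,\sigma}Q_{\sigma,\rho}P_{\rho,\varepsilon}f)(e)=\begin{cases} f(e), & \sigma(v,e)=\rho(v,e),\\ f(e)-[\varepsilon,\rho](e)\,q, & \sigma(v,e)\neq\rho(v,e), \end{cases}
\]
and the analogous formula with $\sigma$ replaced by $\omega$. If $\sigma$ and $\omega$ disagreed at some edge while the two expressions were equal there, we would get $q=0$, a contradiction, so $\sigma=\omega$.

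The heart of the lemma is (d), which I would prove by double inclusion. For $\supseteq$, given $\alpha\sim\rho$, part~(b) places $h_\alpha:=P_{\varepsilon,\alpha}Q_{\alpha,\rho}P_{\rho,\varepsilon}f$ in $q\Delta^\alpha_\FL(G,\varepsilon)\subseteq F(G,\varepsilon;q)$, while the formula from (c) shows $h_\alpha(e)\equiv f(e)\pmod q$. For $\subseteq$, let $h\in F(G,\varepsilon;q)$ satisfy $\Mod_q h=\Mod_q f$. The constraints $|h(e)|,|f(e)|<q$ force $h(e)-f(e)\in\{-q,0,q\}$ edge-wise, and in fact the choice at each edge is binary: $h(e)=f(e)$, or $h(e)=f(e)-[\varepsilon,\rho](e)\,q$ (the other option would violate $|h(e)|<q$). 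Let $S:=\{e:h(e)\neq f(e)\}$ and let $\alpha$ be the orientation obtained from $\rho$ by reversing the arrows on $S$. Subtracting the conservation law~\eqref{Balance-Equation} for $f$ from that for $h$ at each vertex $v$, and using the identity $\varepsilon(v,e)[\varepsilon,\rho](e)=\rho(v,e)$, reduces to $\sum_{e\in S}\rho(v,e)=0$ for every $v$, which is exactly the statement that $(V,S,\rho|_S)$ is directed Eulerian, i.e.\ $\alpha\sim\rho$. The formula from (c) then matches $h$ term-by-term with $h_\alpha$, and part~(c) itself supplies the injectivity of the parametrization $\alpha\mapsto h_\alpha$.

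The main obstacle is the sign bookkeeping in~(d): the congruence $h\equiv f\pmod q$ together with the size bounds leaves a binary keep-or-flip choice per edge, and one must verify that the flow condition on $h$ translates cleanly into the Eulerian condition on the flipped set $S$. Once the explicit formula in~(c) is in hand, the remainder of~(d) is essentially arithmetic modulo $q$ together with the single incidence identity $\varepsilon(v,e)[\varepsilon,\rho](e)=\rho(v,e)$.
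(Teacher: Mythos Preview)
Your proposal is correct and follows the paper's overall strategy: (a) by sign inspection, (b) as the composite $P_{\rho,\varepsilon}$ then $Q_{\sigma,\rho}$ then $P_{\varepsilon,\sigma}$ via Lemma~\ref{PRE} and Lemma~\ref{QRE}(c), and (d) by double inclusion. The local differences are worth noting. In (c) the paper argues without your explicit edge-wise formula: by (b) the image $g:=P_{\varepsilon,\sigma}Q_{\sigma,\rho}P_{\rho,\varepsilon}f$ lies in $q\Delta^\sigma_\FL(G,\varepsilon)$, so (a) gives $\varepsilon_g=\sigma$ (and likewise $\varepsilon_h=\omega$), whence $g=h$ forces $\sigma=\omega$. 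In (d), for the inclusion $\subseteq$, the paper obtains $\alpha\sim\rho$ by appealing to the separate Lemma~\ref{Eulerian} (two real $q$-flows congruent mod $q$ have Eulerian-equivalent associated orientations) and then matches $h$ with $P_{\varepsilon,\varepsilon_h}Q_{\varepsilon_h,\varepsilon_f}P_{\varepsilon_f,\varepsilon}f$ through a four-case sign analysis on $(\mathrm{sgn}\,h(e),\mathrm{sgn}\,f(e))$. Your route---subtracting the two conservation laws and using $\varepsilon(v,e)[\varepsilon,\rho](e)=\rho(v,e)$ to read off $\sum_{e\in S}\rho(v,e)=0$ directly---is more economical and makes Lemma~\ref{Eulerian} unnecessary for this step; the explicit formula you derived in (c) then does the matching in one stroke.
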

\begin{proof}
(a) By definition of $\Delta^\rho_\FL(G,\varepsilon)$ and the fact
$f\in q\Delta^\rho_\FL(G,\varepsilon)$, we have
$[\rho,\varepsilon](e)f(e)>0$ for all $e\in E$. So for each edge $e$
at its one end-vertex $v$, $\rho(v,e)=\varepsilon(v,e)$ if $f(e)>0$,
and $\rho(v,e)=-\varepsilon(v,e)$ if $f(e)<0$. By definition of
$\varepsilon_f$, we see that $\varepsilon_f=\rho$.

(b) Recall Lemma~\ref{PRE} and Lemma~\ref{QRE}(c) that
\[
P_{\rho,\varepsilon} (q\Delta^\rho_{\FL}(G,\varepsilon))
=q\Delta^+_{\FL}(G,\rho) \sp\mbox{and}\sp
Q_{\sigma,\rho}(q\Delta^+_{\FL}(G,\rho)\bigr)=q\Delta^+_{\FL}(G,\sigma).
\]
Since $P_{\varepsilon,\sigma}$ is an involution, then
$P_{\varepsilon,\sigma}(q\Delta^+_{\FL}(G,\sigma))
=q\Delta^\sigma_{\FL}(G,\varepsilon)$. The identity follows
immediately by composition.

(c) The sufficiency is trivial. For necessity, we write
$g:=P_{\varepsilon,\sigma} Q_{\sigma,\rho} P_{\rho,\varepsilon}f$
and $h:=P_{\varepsilon,\omega} Q_{\omega,\rho}
P_{\rho,\varepsilon}f$. Since $g\in
q\Delta^\sigma_{\FL}(G,\varepsilon)$ and $h\in
q\Delta^\omega_{\FL}(G,\varepsilon)$, we have $\varepsilon_g=\sigma$
and $\varepsilon_h=\omega$. Clearly, if $g=h$, then $\sigma=\omega$.

(d) Let us write $g:=P_{\varepsilon,\alpha} Q_{\alpha,\rho}
P_{\rho,\varepsilon}f$ for an orientation $\alpha$ such that
$\alpha\sim\rho$. By definition of $P_{\varepsilon,\alpha},
Q_{\alpha,\rho}, P_{\rho,\varepsilon}$, we have
$g\in\Delta^\alpha_\FL(G,\varepsilon)$ and
\[
g(e)=f(e)+a_eq,\sp e\in E,
\]
where $a_e\in\{-1,0,1\}$. Clearly, $g(e)\equiv f(e)(\mod q)$ for all
$e\in E$; namely, $g\in\Mod_q^{-1}\big(\Mod_qf\big)$. Conversely,
let $h\in F(G,\varepsilon;q)$ be such that $\Mod_qh=\Mod_qf$. We
have
\[
h(e)=f(e)+b_eq, \sp e\in E,
\]
where $b_e\in\{-1,0,1\}$. By definition of
$\varepsilon_f,\varepsilon_h$, a straightforward calculation shows
that for each non-loop edge $e$ (and its one end-vertex $v$),
\[
h(e) =\left\{
\begin{array}{ll}
f(e) & \mbox{if $h(e)>0,f(e)>0$ ($\Leftrightarrow$
$\varepsilon_h(v,e)=\varepsilon_f(v,e)=\varepsilon(v,e)$}),\\
f(e) & \mbox{if $h(e)\leq 0,f(e)\leq 0$ ($\Leftrightarrow$
$\varepsilon_h(v,e)=\varepsilon_f(v,e)\neq \varepsilon(v,e)$}),\\
f(e)-q & \mbox{if $h(e)\leq 0,f(e)>0$ ($\Leftrightarrow$
$\varepsilon_h(v,e)\neq\varepsilon_f(v,e)=\varepsilon(v,e)$),}\\
f(e)+q & \mbox{if $h(e)>0,f(e)\leq 0$ ($\Leftrightarrow$
$\varepsilon_h(v,e)\neq\varepsilon_f(v,e)\neq\varepsilon(v,e)$).}
\end{array}\right.
\]
By definition of $P_{\varepsilon,\varepsilon_h},
Q_{\varepsilon_h,\varepsilon_f}, P_{\varepsilon_f,\varepsilon}$,
another straightforward calculation shows that for each non-loop
edge $e$ (and its one end-vertex $v$),
\[
(P_{\varepsilon,\varepsilon_h}Q_{\varepsilon_h,\varepsilon_f}P_{\varepsilon_f,\varepsilon}f)(e)
=\left\{
\begin{array}{ll}
f(e) & \mbox{if $\varepsilon_h(v,e)=\varepsilon_f(v,e)=\varepsilon(v,e)$,}\\
f(e) & \mbox{if $\varepsilon_h(v,e)=\varepsilon_f(v,e)\neq\varepsilon(v,e)$,}\\
f(e)-q & \mbox{if $\varepsilon_h(v,e)\neq\varepsilon_f(v,e)=\varepsilon(v,e)$,}\\
f(e)+q & \mbox{if
$\varepsilon_h(v,e)\neq\varepsilon_f(v,e)\neq\varepsilon(v,e)$.}
\end{array}\right.
\]
This means that $h=P_{\varepsilon,\varepsilon_h}
Q_{\varepsilon_h,\varepsilon_f} P_{\varepsilon_f,\varepsilon}f$.
Since $\varepsilon_f=\rho$ by Part (a), thus
$h=P_{\varepsilon,\alpha} Q_{\alpha,\rho} P_{\rho,\varepsilon}f$
with $\alpha=\varepsilon_h$.
\end{proof}

\begin{prop}\label{EER}
The number of orientations on $G$ that are Eulerian equivalent to
$\varepsilon$ is the number of $0$-$1$ flows of $(G,\varepsilon)$,
i.e.,
\begin{equation}\label{Cardinality-Eulerian-equivalence-class}
 \#[\varepsilon] = \bar{\varphi}_{\varepsilon}(G,1) = |\bar\Delta^+_\FL(G,\varepsilon)\cap{\Bbb Z}^E|.
\end{equation}
\end{prop}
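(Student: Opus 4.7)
The plan is to set up an explicit bijection between the Eulerian equivalence class $[\varepsilon]$ and the set of $0$-$1$ flows of $(G,\varepsilon)$, and then observe that the latter is counted by $\bar\varphi_{\varepsilon}(G,1)$. By \eqref{Bar-Phi-Z-Epsilon}, $\bar\varphi_{\varepsilon}(G,1)=L(\bar\Delta^+_\FL(G,\varepsilon),1) = |\bar\Delta^+_\FL(G,\varepsilon)\cap\mathbb{Z}^E|$, and lattice points of $\bar\Delta^+_\FL(G,\varepsilon)$ are exactly the integer flows of $(G,\varepsilon)$ taking values in $\{0,1\}$. So it suffices to produce a bijection $\Phi$ between $[\varepsilon]$ and these $0$-$1$ flows.

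I would define $\Phi:[\varepsilon]\to\bar\Delta^+_\FL(G,\varepsilon)\cap\mathbb{Z}^E$ by $\Phi(\rho)=1_{E(\rho\neq\varepsilon)}$, the indicator function of the edges on which $\rho$ and $\varepsilon$ disagree. The central verification is that $\Phi(\rho)$ is indeed a flow of $(G,\varepsilon)$. By the definition of $\sim$, the subgraph induced by $E(\rho\neq\varepsilon)$ is directed Eulerian with respect to $\varepsilon$, meaning that at every vertex $v\in V$,
\[
\sum_{e\in E(\rho\neq\varepsilon)}\varepsilon(v,e)=0.
\]
Rewriting this sum as $\sum_{e\in E}\varepsilon(v,e)\,1_{E(\rho\neq\varepsilon)}(e)=0$ is precisely the Conservation Law \eqref{Balance-Equation} for the function $\Phi(\rho)$, so $\Phi(\rho)$ is a $\{0,1\}$-valued flow of $(G,\varepsilon)$.

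For the inverse map $\Psi$, I would start from a $0$-$1$ flow $f$ of $(G,\varepsilon)$, let $E_f=\{e\in E\,:\,f(e)=1\}$, and define an orientation $\rho$ by
\[
\rho(v,e)=\begin{cases}-\varepsilon(v,e) & \text{if }e\in E_f,\\ \phantom{-}\varepsilon(v,e) & \text{if }e\notin E_f,\end{cases}
\]
where the loop convention $-(\pm1)=\mp1$ ensures $\rho$ is a legitimate orientation even on loops. Then $E(\rho\neq\varepsilon)=E_f$, and the flow condition $\sum_{e\in E}\varepsilon(v,e)f(e)=0$ reads $\sum_{e\in E_f}\varepsilon(v,e)=0$, which is exactly the directed Eulerian condition at every vertex. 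Hence $\rho\sim\varepsilon$ and $\Psi(f)=\rho$ lands in $[\varepsilon]$.

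Finally, $\Phi\circ\Psi$ and $\Psi\circ\Phi$ are immediately the respective identities from the definitions. There is no real obstacle here; the only point requiring care is handling the double-value convention $\varepsilon(v,e)=\pm1$ on loops, where the sign-flip prescription defining $\rho$ must be read as negating both values of the ordered pair, after which everything goes through unchanged. This gives \eqref{Cardinality-Eulerian-equivalence-class}.
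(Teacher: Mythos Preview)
Your proof is correct and follows essentially the same approach as the paper: the paper defines the map $\rho\mapsto I_{\rho,\varepsilon}$ (your $\Phi$), verifies it lands in the set of $0$-$1$ flows via the directed Eulerian condition, and constructs the inverse exactly as your $\Psi$. Your explicit mention of the loop convention is a small refinement not spelled out in the paper, but otherwise the arguments coincide.
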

\begin{proof}
It is enough to show that the following map
\[
[\varepsilon]=\{\rho\in{\mathcal
O}(G)\:|\:\rho\sim\varepsilon\}\rightarrow
\bar\Delta^+_\FL(G,\varepsilon)\cap{\Bbb Z}^E,\sp \rho\mapsto
I_{\rho,\varepsilon},
\]
is a bijection. Fix an orientation $\rho$ that is Eulerian
equivalent to $\varepsilon$. Note that $I_{\rho,\varepsilon}$ is the
characteristic function of the edge subset $E(\rho\neq\varepsilon)$.
Since $E(\rho\neq\varepsilon)$ is directed Eulerian with the
orientation $\varepsilon$. Then $I_{\rho,\varepsilon}$ is a flow of
$(G,\varepsilon)$ with values in $\{0,1\}$. So the map $\rho\mapsto
I_{\rho,\varepsilon}$ is well-defined, and is clearly injective.
Conversely, given a 0-1 flow $f$ of $(G,\varepsilon)$. Let $\rho$ be
an orientation on $G$ defined by $\rho(v,e)=\varepsilon(v,e)$ if
$f(e)=0$ and $\rho(v,e)=-\varepsilon(v,e)$ if $f(e)=1$, where $v$ is
an end-vertex of the edge $e$. Then $E(\rho\neq\varepsilon)=\{e\in
E(G)\:|\: f(e)=1\}$, which is directed Eulerian with the orientation
$\varepsilon$. This means that $\rho\sim\varepsilon$ and
$I_{\rho,\varepsilon}=f$. Hence the map is surjective.
\end{proof}
\vspace{1ex}

{\sc Proof of Theorem~\ref{Modular-Flow-Thm}.} \vspace{2ex}

It has been shown that $\varphi(G,q)$ is a polynomial function of
degree $n(G)$. Fix an orientation $\rho\in {\mathcal
O}_\textsc{tc}(G)$. For each orientation $\sigma\sim\rho$ and any
$f\in q\Delta^\sigma_{\FL}(G,\varepsilon)$, we have
\begin{eqnarray}
|F(G,\varepsilon;q)\cap\Mod_q^{-1}(\Mod_qf)| &=&
\#\{P_{\varepsilon,\alpha}Q_{\alpha,\rho}P_{\rho,\varepsilon}f \:|\:
\alpha\sim\sigma\} \nonumber\\
&=& \#[\sigma]=\#[\rho]. \label{Inverse}
\end{eqnarray}

Now apply Parts (b) and (d) of Lemma~\ref{Minverse}; we have the
disjoint unions
\begin{align}
\bigsqcup_{\sigma\in[\rho]} q\Delta^\sigma_{\FL}(G,\varepsilon) &=
\bigsqcup_{\sigma\in[\rho]} P_{\varepsilon,\sigma} Q_{\sigma,\rho}
P_{\rho,\varepsilon} (q\Delta^\rho_{\FL}(G,\varepsilon)) \nonumber\\
&= \bigsqcup_{\sigma\in[\rho],\: f\in
q\Delta^\rho_\FL(G,\varepsilon)} \{P_{\varepsilon,\sigma}
Q_{\sigma,\rho}
P_{\rho,\varepsilon}f\} \nonumber\\
&= \bigsqcup_{f\in q\Delta^\rho_\FL(G,\varepsilon)}
F(G,\varepsilon;q)\cap \Mod_q^{-1}(\Mod_qf) \nonumber \\
&= F(G,\varepsilon;q)\cap
\Mod_q^{-1}\Mod_q(q\Delta^\rho_{\FL}(G,\varepsilon)). \nonumber
\end{align}
Note that the above orientation $\rho$ can be replaced by any
orientation $\sigma$ that is Eulerian equivalent to $\rho$. We
further have
\begin{equation}\label{ABCD}
\bigsqcup_{\sigma\in[\rho]} q\Delta^\sigma_{\FL}(G,\varepsilon) =
F(G,\varepsilon;q)\cap
\Mod_q^{-1}\Mod_q\bigg(\bigsqcup_{\sigma\in[\rho]}
q\Delta^\sigma_{\FL}(G,\varepsilon)\bigg).
\end{equation}
On the one hand, recall $\varphi_{\sigma}(G,q)=\varphi_{\rho}(G,q)$
whenever $\sigma\sim\rho$ (see Lemma~\ref{QRE}); then the number of
lattice points in the left-hand side of \eqref{ABCD} is
\[
\varphi_{\rho}(G,q)\cdot \#[\rho].
\]
On the other hand, \eqref{Inverse} implies that the number of
lattice points of the right-hand side of \eqref{ABCD} is
\[
\Big|\Mod_q\Big(\bigsqcup_{\sigma\in[\rho]}
q\Delta^\sigma_{\FL}(G,\varepsilon) \cap {\Bbb Z}^E\Big)\Big|\cdot
\#[\rho].
\]
It then follows that
\[
\Bigl|\Mod_q\Bigl(\bigsqcup_{\sigma\in[\rho]}
q\Delta^\sigma_{\FL}(G,\varepsilon) \cap {\Bbb Z}^E \Bigr)\Bigr| =
\varphi_{\rho}(G,q).
\]
Note that (\ref{Flow-Decomposition}) implies the disjoint
decomposition
\[
F_{\rm nz{\mathbbm z}}(G,\varepsilon;q) = \bigsqcup_{\rho\in
[{\mathcal O}_\textsc{tc}(G)]} \bigsqcup_{\sigma\in[\rho]}
q\Delta^\sigma_{\FL}(G,\varepsilon)\cap{\Bbb Z}^E.
\]
Applying the map $\Mod_q$, we obtain the following disjoint
decomposition
\begin{equation}\label{MD}
\begin{split}
F_{\rm nz}(G,\varepsilon;{\Bbb Z}/q{\Bbb Z}) &= \Mod_q(F_{\rm
nz{\mathbbm z}}(G,\varepsilon;q)) \\
&= \bigsqcup_{[\rho]\in[{\mathcal O}_\textsc{tc}(G)]}
\Mod_q\bigg(\bigsqcup_{\sigma\in[\rho]}
q\Delta^\sigma_{\FL}(G,\varepsilon)\cap{\Bbb Z}^E\bigg).
\end{split}
\end{equation}
The first equation follows from the surjectivity of $\Mod_q$ by
Lemma~\ref{Surjective}. To see the disjointness of the union in the
second equation, suppose the union is not disjoint. This means that
there exist integer flows $f_i\in
q\Delta^{\sigma_i}_\FL(G,\varepsilon)$ and orientations $\rho_i$
such that $\sigma_i\sim\rho_i$ ($i=1,2$), $[\rho_1]\neq[\rho_2]$,
and $\Mod_q(f_1)=\Mod_q(f_2)$. Then Lemma~\ref{Minverse}(a) implies
that $\varepsilon_{f_i}=\sigma_i$, and Lemma~\ref{Eulerian} implies
that $\varepsilon_{f_1}\sim \varepsilon_{f_2}$. It follows by
transitivity that $\rho_1\sim\rho_2$, i.e., $[\rho_1]=[\rho_2]$.
This is a contradiction.

Counting the number of elements of both sides of \eqref{MD}, we
obtain
\[
\varphi(G,q)=\sum_{[\rho]\in [{\mathcal O}_\textsc{tc}(G)]}
\varphi_{\rho}(G,q).
\]
The Reciprocity Law (\ref{Modular-Flow-Reciprocity-Law}) follows
from the Reciprocity Law (\ref{FPRL}) and the definition of
$\bar\varphi(G,q)$.

Let $q=0$. We have $\varphi(G,0)=(-1)^{n(G)}\bar\varphi(G,0)$ by
(\ref{Modular-Flow-Reciprocity-Law}). Since
$\bar\varphi_{\rho}(G,0)=1$ for all $\rho\in {\mathcal
O}_\textsc{tc}(G)$, we see that $\bar\varphi(G,0)=\#[{\mathcal
O}_\textsc{tc}(G)]$ by (\ref{Closed-Modular-Flow-Sum}). Hence
$\varphi(G,0)=(-1)^{n(G)}\#[{\mathcal O}_\textsc{tc}(G)]$.
\hfill{$\Box$}\\

Searching online we found a result on modular flow reciprocity by
Breuer and Sanyal \cite{Breuer-Sanyal1}, which states that
$(-1)^{n(G)}\varphi(G,-q)$ counts the numbers of pairs $(f,\sigma)$,
where $f$ is a flow of $(G,\varepsilon)$ modulo $q$ and $\sigma$ is
a totally cyclic reorientation of the digraph $(G/{\supp
f},\varepsilon)$. The result can be written as the sum
\begin{equation}\label{BS1}
\varphi(G,-q)=(-1)^{n(G)}\sum_{X\subseteq E}\varphi(\langle
X\rangle,q)\,|\mathcal{O}_\textsc{tc}(G/X)|,
\end{equation}
where $G/X$ is the graph obtained from $G$ by contacting the edges
of $X$. The term $\varphi(\langle X\rangle,q)\,|{\mathcal
O}_\textsc{tc}(G/X)|$ in (\ref{BS1}) is nonzero if and only if the
graphs $\langle X\rangle$ and $G/X$ are bridgeless. The formula
(\ref{BS1}) can be argued straightforward as follows.

Notice the trivial fact that each flow $f$ corresponds to a
nowhere-zero flow on its support $\supp(f):=\{e\in E\:|\:f(e)\neq
0\}$. This means that
\[
t^{n\langle X\rangle} = \sum_{Y\subseteq X} \varphi(\langle
Y\rangle,t),\sp X\subseteq E.
\]
The M\"{o}bius inversion implies
\[
\varphi(\langle X\rangle,t)=\sum_{Y\subseteq X}(-1)^{|X-Y|}
t^{n\langle Y\rangle},\sp X\subseteq E.
\]
In particular, for $X=E$ and $t=-q$, we have
\begin{eqnarray*}
\varphi(G,-q) &=& \sum_{Y\subseteq E} (-1)^{|E-Y|+n\langle
Y\rangle} q^{n\langle Y\rangle}\\
&=& \sum_{Y\subseteq E} (-1)^{|E-Y|+n\langle Y\rangle}
\sum_{X\subseteq Y}\varphi(\langle X\rangle,q)\\
&=& \sum_{X\subseteq E}\varphi(\langle X\rangle,q) \sum_{X\subseteq
Y\subseteq E}(-1)^{|E-Y|+n\langle Y\rangle}.
\end{eqnarray*}
Since $n\langle Y\rangle=n\langle X\rangle+n(\langle Y\rangle/X)$
for all edge subsets $Y$ of $G$ such that $X\subseteq Y$, we see
that for each fixed edge subset $X\subseteq E$,
\begin{eqnarray*}
\sum_{X\subseteq Y\subseteq E}(-1)^{|E-Y|+n\langle Y\rangle} &=&
\sum_{X\subseteq Y\subseteq E}(-1)^{|E-Y|+n\langle
X\rangle+n(\langle Y\rangle/X)} \nonumber\\
&=& (-1)^{n\langle X\rangle} \sum_{Z\subseteq E(G/X)}
(-1)^{|E(G/X)-Z|+n\langle Z\rangle} \nonumber\\
&=& (-1)^{n\langle X\rangle +n\langle G/X\rangle}
|\mathcal{O}_\textsc{tc}(G/X)|.
\end{eqnarray*}
The above last equality follows from the Zaslavsky formula
(\ref{Zaslavsky-First-Formula}) about the flow arrangement
${\mathcal A}_\FL(G,\varepsilon;{\Bbb R})$, i.e.,
\[
\varphi(G,-1)=\sum_{Z\subseteq E}(-1)^{|E|-|Z|+n\langle Z\rangle}=
(-1)^{n(G)}|\mathcal{O}_\textsc{tc}(G)|.
\]
Now the identity (\ref{BS1}) follows immediately.

\section{Connection with the Tutte polynomial}

The Tutte polynomial (see \cite{Bollobas1}, p.337) of a graph
$G=(V,E)$ is a polynomial in two variables, which may be defined as
\begin{gather}
T_G(x,y)=\sum_{A\subseteq E}(x-1)^{r\langle E\rangle-r\langle
A\rangle}(y-1)^{n\langle A\rangle},
\end{gather}
where $\langle E\rangle=|V|-c(G)$, $r\langle A\rangle=|V|-c\langle
A\rangle$, $n\langle A\rangle=|A|-r\langle A\rangle$. The polynomial
$T_G(x,y)$ satisfies the Deletion-Contraction Relation:
\[
T_G(x,y)=\left\{\begin{array}{ll} xT_{G/e}(x,y) &\mbox{if $e$ is a bridge}, \\
yT_{G-e}(x,y) & \mbox{if $e$ is a loop}, \\
T_{G-e}(x,y)+T_{G/e}(x,y) & \mbox{otherwise}.
\end{array}\right.
\]
It is well-known that the flow polynomial $\varphi(G,t)$ is related
to $T_G(x,y)$ by
\begin{gather}
\varphi(G,t)=(-1)^{n(G)} T_G(0,1-t).
\end{gather}
Thus
\[
\bar\varphi(G,t)=(-1)^{n(G)}\varphi(G,-t)=T_G(0,t+1).
\]
We conclude the information as the following proposition.

\begin{prop}
The Tutte polynomial $T_G$ is related to $\varphi$ and $\bar\varphi$
as follows:
\begin{align}
T_G(0,t) &= \bar\varphi(G,t-1)=(-1)^{n(G)}\varphi(G,1-t).
\end{align}
In particular, $T_G(0,1)=|\varphi(G,0)|=\bar\varphi(G,0)$ counts the
number of Eulerian equivalence classes of totally cyclic
orientations on $G$.
\end{prop}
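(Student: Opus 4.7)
The plan is to derive the identity by chaining together two facts that are already in hand: the classical evaluation $\varphi(G,t)=(-1)^{n(G)}T_G(0,1-t)$ quoted just above the proposition, and the Reciprocity Law $\varphi(G,-t)=(-1)^{n(G)}\bar\varphi(G,t)$ proved in Theorem~\ref{Modular-Flow-Thm}. Everything reduces to a substitution; no new combinatorial input is needed.

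First I would take the classical identity and substitute $t\mapsto 1-t$. This gives
\[
\varphi(G,1-t)=(-1)^{n(G)}T_G(0,t),
\]
which, after multiplying both sides by $(-1)^{n(G)}$, yields the second equality in the proposition: $T_G(0,t)=(-1)^{n(G)}\varphi(G,1-t)$. Next, I would apply the Reciprocity Law \eqref{Modular-Flow-Reciprocity-Law} in the form $(-1)^{n(G)}\varphi(G,-s)=\bar\varphi(G,s)$ with $s=t-1$; this converts $(-1)^{n(G)}\varphi(G,1-t)$ into $\bar\varphi(G,t-1)$, giving the first equality $T_G(0,t)=\bar\varphi(G,t-1)$. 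The two equalities of the displayed identity are thereby established.

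For the ``in particular'' clause, I would specialize to $t=1$. On the Tutte side this is just $T_G(0,1)$. On the flow side, the first equality gives $T_G(0,1)=\bar\varphi(G,0)$, and the second gives $T_G(0,1)=(-1)^{n(G)}\varphi(G,0)=|\varphi(G,0)|$ (using that $\bar\varphi(G,0)\geq 0$ forces the sign). Finally, Theorem~\ref{Modular-Flow-Thm} states that $|\varphi(G,0)|$ counts the number of Eulerian equivalence classes of totally cyclic orientations on $G$, which completes the interpretation.

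There is no real obstacle here; the only care required is tracking the sign $(-1)^{n(G)}$ and the shift between $t$, $1-t$, and $t-1$ consistently. Everything substantive (the polynomiality of $\varphi$ and $\bar\varphi$, the Reciprocity Law, and the combinatorial meaning of $|\varphi(G,0)|$) is already established in Theorem~\ref{Modular-Flow-Thm}, so the proof is a two-line formal manipulation followed by a one-line appeal.
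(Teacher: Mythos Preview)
Your proposal is correct and follows essentially the same route as the paper: the paper derives the identity in the text immediately preceding the proposition by combining the well-known relation $\varphi(G,t)=(-1)^{n(G)}T_G(0,1-t)$ with the Reciprocity Law of Theorem~\ref{Modular-Flow-Thm}, and then reads off the $t=1$ specialization using the interpretation of $|\varphi(G,0)|$ from that theorem.
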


\begin{exmp}\label{BNUV}
Let $B_n(u,v)$ be a graph with two vertices $u,v$, and $n$ multiple
edges $e_1,\dots,e_n$  between $u$ and $v$. Let $\varepsilon$ be an
orientation of $B_n(u,v)$ such that all edges have the direction
from $u$ to $v$. The number of integer flows $f$ of $B_n(u,v)$ such
that $|f|<q$ is equal to the number of integer solutions of the
linear inequality system
\[
x_1+\cdots+x_n=0,\sp -(q-1)\leq x_i\leq q-1.
\]
Let $x_i=y_i-(q-1)$. The above system reduces to
\begin{equation}\label{Y2A}
y_1+\cdots+y_n=n(q-1),\sp 0\leq y_i\leq 2q-2.
\end{equation}
Recall that the number of nonnegative integer solutions of
$y_1+\cdots+y_n=r$ is
\[
\Bigl\langle{n\atop r}\Bigr\rangle:=\Bigl({n+r-1\atop
r}\Bigr)=\Bigl({n+r-1\atop n-1}\Bigr).
\]
So the number of nonnegative integer solutions of
$y_1+\cdots+y_n=n(q-1)$ is $\bigl({nq-1\atop n-1}\bigr)$, which
includes the number of integer solutions of (\ref{Y2A}) and the
number of nonnegative integer solutions having at least one $y_i\geq
2q-1$.

Let $s_n(q)$ denote the number of integer solutions of (\ref{Y2A}).
To figure out $s_n(q)$, let $Y$ be the set of nonnegative integer
solutions of $y_1+\cdots+y_n=n(q-1)$, $Y_i$ the set of nonnegative
integer solutions of $y_1+\cdots+y_n=n(q-1)$ with the $i$th variable
$y_i\geq 2q-1$, and $Y_0$ the set of integer solutions of
(\ref{Y2A}). Then $Y_0=Y-\bigcup_{i=1}^nY_i$. Consider the case
where $j$ variables are greater than or equal to $2q-1$, say,
$y_{n-j+1},\ldots,y_n$; then $y_1+\cdots+y_{n-j}\leq
n(q-1)-j(2q-1)$. The number of such solutions is equal to the number
of nonnegative integer solutions of the equation
$y_1+\cdots+y_{n-j}+y'=n(q-1)-j(2q-1)$, which is given by
\[
\biggl\langle {n-j+1\atop n(q-1)-j(2q-1)}
\biggr\rangle=\left({(n-2j)q\atop
n(q-1)-j(2q-1)}\right)=\left({(n-2j)q\atop n-j}\right).
\]
Applying the Inclusion-Exclusion Principle,
\begin{eqnarray*}
s_n(q) &=& \#(Y)+\sum_{\emptyset\neq I\subseteq[n]}(-1)^{|I|} \#\bigg(\bigcap_{i\in I} Y_i\bigg)\\
&=& \biggl({nq-1\atop n-1}\biggr)
+\sum_{j=1}^{\lfloor\frac{n(q-1)}{2q-1}\rfloor} (-1)^j \left({n\atop
j}\right) \left({(n-2j)q\atop n-j}\right).
\end{eqnarray*}
We list $s_1,s_2,s_3,s_4$ explicitly as follows:
\[
s_1(q)=1,\sp s_2(q)=2q-1,\sp s_3(q)=3q^2-3q+1,
\]
\[
s_4(q)=\left({4q-1\atop 3}\right)-4\left({2q\atop 3}\right)\\
= \frac{1}{3}(2q-1)\left(8q^2-8q+3\right).
\]
Let $s_0(q)$ $(\equiv 1)$ denote the number of zero flows of
$B_n(u,v)$. Then $\varphi_{\mathbbm z}(B_4,q)$, defined as the
number of nowhere-zero integer $q$-flows of $B_4(u,v)$, counts the
number of integer flows $f$ such that $0<|f|<q$, and is given by
\begin{eqnarray}
\varphi_{\mathbbm z}(B_4,q) &=&
\#\biggl(\bigl[1-q,q-1\bigr]^{E(B_4)}\cap F(B_4,\varepsilon;{\Bbb
Z})- \bigcup_{e\in E(B_4)} F_e\biggr) \\
&=& s_4(q)-4s_3(q)+6s_2(q)-4s_1(q)+s_0(q) \nonumber\\
&=& \frac{2}{3}(q-1) \bigl(8q^2-22q+21\bigr). \nonumber
\end{eqnarray}

Likewise, the number of flows of $B_n(u,v)$ modulo $q$ is $q^{n-1}$,
$n\geq 1$. Thus $\varphi(B_4,q)$, defined as the number of
nowhere-zero flows of $B_4(u,v)$ modulo $q$, is given by
\begin{eqnarray}
\varphi(B_4,q) &=& \#\biggl(F(B_4,\varepsilon;{\Bbb
Z}/q{\Bbb Z})- \bigcup_{e\in E(B_4)} F_e\biggr) \\
&=& q^3-4q^2+6q-4+1 \nonumber\\
&=& (q-1)\bigl(q^2-3q+3\bigr). \nonumber
\end{eqnarray}
In particular,
\[
|\varphi_{\mathbbm z}(B_4,0)\bigr|=\bigl|\varphi(B_4,-1)|=14, \sp
|\varphi(B_4,0)|=3.
\]
There are $14$ totally cyclic orientations on $B_4(u,v)$ by
Theorem~\ref{Integral-Flow-Thm}. The $14$ orientations can be
grouped into $3$ Eulerian equivalence classes by
Theorem~\ref{Modular-Flow-Thm}; see Figure~\ref{B4}.
\begin{figure}[h]
\centering
\includegraphics[width=50mm]{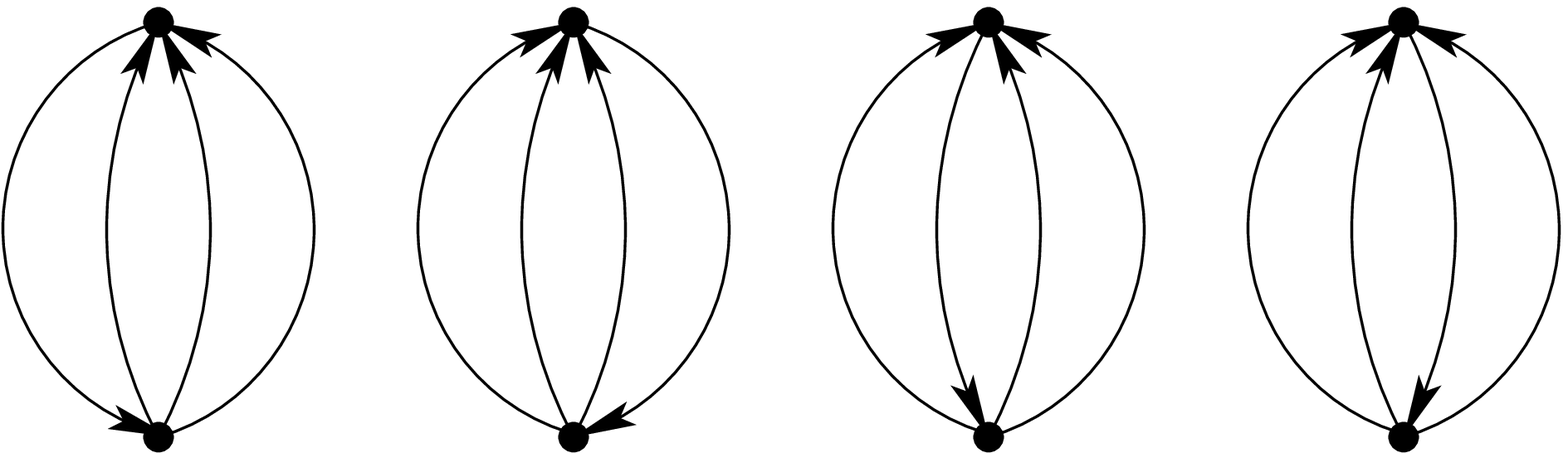}
\vspace{2ex}

\includegraphics[width=50mm]{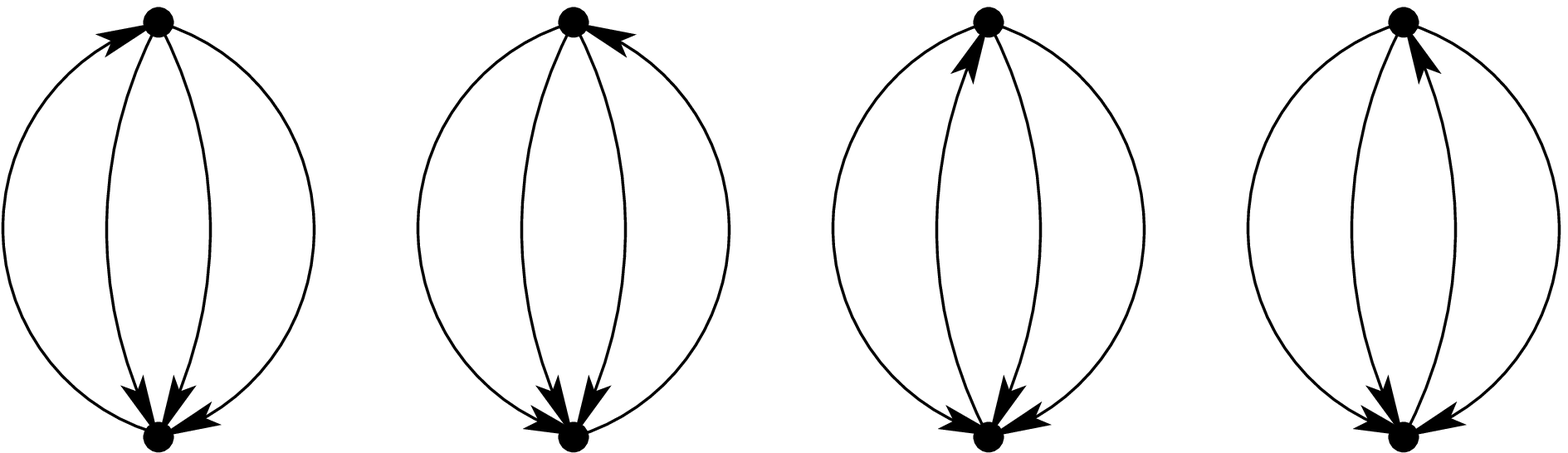}
\vspace{2ex}

\includegraphics[width=75mm]{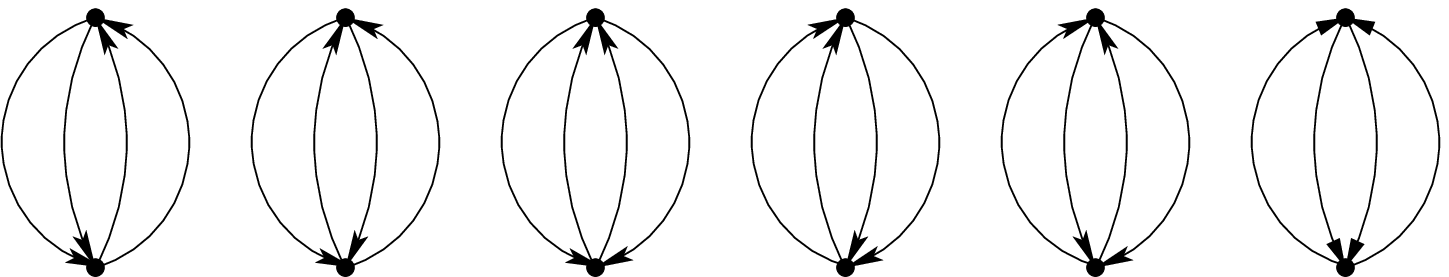}
\caption{The three Eulerian
equivalence classes of ${\mathcal O}_\textsc{tc}(B_4)$. \label{B4}}
\end{figure}

Let $\varepsilon_k$ be an orientation on $B_n(u,v)$ such that the
arrows of the edges $e_1,\ldots,e_k$ point from $u$ to $v$, and the
arrows of the edges $e_{k+1},\ldots,e_n$ point from $v$ to $u$. If a
$0$-$1$ flow $f$ of $(B_n,\varepsilon_k)$ has value 1 on exact $j$
edges of $e_1,\ldots,e_k$, then $f$ must have value $1$ on exact $j$
edges of $e_{k+1},\ldots,e_n$. Thus by Lemma~\ref{EER},
$\bar\varphi_{\varepsilon_k}(B_n,1)$ is the number of 0-1 flows of
$(B_n,\varepsilon_k)$, and is given by
\[
\bar\varphi_{\varepsilon_k}(B_n,1)=\sum_{j=0}^{\min(k,n-k)}
\Bigl({k\atop j}\Bigr)\Bigl({n-k\atop j}\Bigr).
\]
For the case $n=4$, we see that
$\bar\varphi_{\varepsilon_0}(B_4,1)=1$,
$\bar\varphi_{\varepsilon_1}(B_4,1)=4$,
$\bar\varphi_{\varepsilon_2}(B_4,1)=6$,
$\bar\varphi_{\varepsilon_3}(B_4,1)=4$, and
$\bar\varphi_{\varepsilon_4}(B_4,1)=1$. According to
Theorem~\ref{Modular-Flow-Thm}, $\bar\varphi_{\varepsilon_k}(B_4,1)$
counts the number of orientations of $B_4$ that are Eulerian
equivalent to $\varepsilon_k$. Indeed, there are $4$ orientations
Eulerian equivalent to $\varepsilon_1$ and $\varepsilon_3$
respectively, and $6$ orientations Eulerian equivalent to
$\varepsilon_2$. These orientations are listed in Figure~\ref{B4}.
However, the Eulerian equivalence classes for $\varepsilon_0$ and
$\varepsilon_4$ are singletons; the integral and modular flow
polynomials with respect to these orientations are the zero
polynomial. We list the two orientations as follows:
\begin{figure}[h]
\centering
\includegraphics[width=50mm]{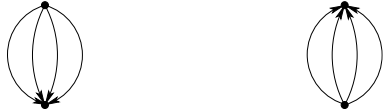} \caption{The other
two Eulerian equivalence classes of orientations of $B_4$ with
directed cut. \label{B4d}}
\end{figure}
\end{exmp}

\noindent{\bf Remark.} The coefficients of the integral flow
polynomial $\varphi_{\mathbbm z}(G,t)$ are not necessarily integers
as shown in Example~\ref{BNUV}. The combinatorial interpretation on
the coefficients of $\varphi_{\mathbbm z}(G,t)$ is particularly
wanted. \vspace{1ex}

\noindent{\bf Acknowledgement.} We thank the referees, in particular
the second referee, for carefully reading the manuscript and
offering several valuable comments.

\bibliographystyle{amsalpha}

\begin{thebibliography}{A}


\bibitem{Beck-Zaslavsky1}
Beck, M.; Zaslavsky, T. The number of nowhere-zero flows of graphs
and signed graphs,  {\em J. Combin. Theory Ser. B} {\bf 96} (2006),
901--918.

\bibitem{Berge} Berge, C. {\em Graphs and Hypergraphs}, American Elsevier, New York, 1973.


\bibitem{Bollobas1}
Bollob\'as, B. {\em Modern Graph Theory}, Springer, 2002.

\bibitem{Bondy-Murty1}
Bondy, J.A.; Murty, U.S.R., {\em Graph Theory}, Springer, 2008.

\bibitem{Breuer-Sanyal1}
Breuer, F; Sanyal, R. Ehrhart theory, modular flow reciprocity law,
and the Tutte polynomial, arXiv:0907.0845v1 [math.CO] 5 Jul 2009.



\bibitem{Brylawski-Oxley1}
Brylawski, T.; Oxley, J.G. The Tutte polynomial and its
applications, in: {\em Matroid Applications}, N. White (ed.), {\rm
Encyclopedia of Math. \& Its Appl.,} Vol. {\bf 40}, Cambridge:
Cambridge Univ. Press, 1992.

\bibitem{Euler-char}
Chen, B. On the Euler characteristics of finite unions of convex
sets, {\em Discrete Comput. Geom.} {\bf 10} (1993), 79--93.

\bibitem{Char-poly}
Chen, B. On characteristic polynomials of subspace arrangements,
{\em J. Combin. Theory Ser. A} {\bf 90} (2000), 347--352.

\bibitem{Lattice-points}
Chen, B. Lattice points, Dedekind sums, and Ehrhart polynomials of
lattice polyhedra, {\em Discrete Comput. Geom.} {\bf 28} (2002),
175--199.

\bibitem{Ehrhart-poly}
Chen, B. Ehrhart polynomials of lattice polyhedral functions, in:
{\em Integer Points in Polyhedra -- Geometry, Number Theory,
Algebra, Optimization,} pp.\,37--63, Contemp. Math., {\bf 374},
Amer. Math. Soc., Providence, RI, 2005.


\bibitem{OLS-I}
Chen, B. Orientations, lattice polytopes, and group arrangements I:
Chromatic and tension polynomials of graphs, {\em Ann. Combin.} {\bf
12} (2010), 425--452.


\bibitem{Crapo-Rota1}
Crapo, H.; Rota, G.-C. {\em On the Foundations of Combinatorial Theory:
Combinatorial Geometries,} MIT Press, Cambridge, MA, 1970.

\bibitem{Ehrenborg-Readdy1}
Ehrenborg, R.; Readdy, M.A. On valuations, the characteristic
polynomial, and complex subspace arrangements, {\em Adv. Math.} {\bf
134} (1998), 32--42.

\bibitem{Ehrenborg-Readdy2}
Ehrenborg, R.; Readdy, M.A. The Dowling transform of subspace
arrangements. in: {\rm Memory of Gian-Carlo Rota}, {\em J. Combin.
Theory Ser. A} {\bf 91} (2000), 322--333.

\bibitem{Greene-Zaslvsky1}
Greene, C; Zaslavsky, T. On the interpretation of Whitney numbers
through arrangements of hyperplanes, zonotopes, non-Radon
partitions, and orientations of graphs, {\em Trans. Amer. Math.
Soc.} {\bf 280} (1983), 97--126.

\bibitem{Groemer1}
Groemer, H. On the extension of additive functionals on the classes
of convex sets, {\em Pacific J. Math.} {\bf 75} (1978), 397--410.


\bibitem{Kochol1}
Kochol, M. Polynomials associated with nowhere-zero flows, {\em J.
Combin. Theory Ser. B}\; {\bf 84} (2002), 260--269.

\bibitem{LasVergnas1}
Las Vergnas, M. Extensions normales d'un matro\"{\i}de, polyn\^{o}me
de Tutte d'un morphisme. {\em C. R. Acad. Sci. Paris S¨¦r. A-B} {\bf
280} (1975), no. 22, 1479--1482.


\bibitem{Rota1}
Rota, G.-C. On the foundations of combinatorial theory I. Theory of
M\"obius functions, {\em Z. Wahrsch. Verw. Gebiete } {\bf 2} (1964),
340--368.


\bibitem{Stanley1}
Stanley, R.P. Acyclic orientations of graphs, {\em Discrete Math.}
{\bf 5} (1973), 171--178.

\bibitem{Stanley2}
Stanley, R.P. {\em Enumerative Combinatorics I}, Cambridge Univ.
Press, Cambridge, 1997.

\bibitem{Schanuel1}
Schanuel, S. Negative sets have Euler characteristic and dimension,
in: {\em Category Theory} (Como, 1990), 379--385, Lecture Notes in
Math. {\bf 1488}, Springer, Berlin, 1991.

\bibitem{Tutte1}
Tutte, W.T. On the embedding of linear graphs in surfaces, {\em
Proc. London Math. Soc. Ser. 2} {\bf 52} (1950), 474--483.

\bibitem{Tutte2}
Tutte, W.T. A contribution to the theory of chromatic polynomials,
{\em Canadian J. Math.} {\bf 6} (1954), 80--91.

\bibitem{Zaslavsky1}
Zaslavsky, T. {\em Facing up to Arrangements: Face-count Formulas
for Partitions of Space by Hyperplanes,} Mem. Amer. Math. Soc.
No.\,154, 1975.

\bibitem{Zaslavsky2}
Zaslavsky, T. A combinatorial analysis of topological dissections,
{\em Adv. in Math.} {\bf 25} (1977), 267--285.

\bibitem{Zhang1}
Zhang, C.-Q. {\em Integer Flows and Circuit Covers of Graphs},
Marcel Dekker, New York, NY, 1997.
\end{thebibliography}

\end{document}